\newcommand{\FF}{{\mathbb F}}
\newcommand{\ZZ}{{\mathbb Z}}
\newcommand{\QQ}{{\mathbb Q}}
\def\Fq2{{\mathbb F}_{q^2}}
\def\Fp2{{\mathbb F}_{p^2}}
\def\PP{{\mathbb P}}
\def\cD{{\mathcal D}}
\def\cE{{\mathcal E}}
\def\cC{{\mathcal C}}
\def\cX{{\mathcal X}}
\def\cJ{{\mathcal J}}
\def\cA{{\mathcal A}}
\def\cY{{\mathcal Y}}
\numberwithin{equation}{section}
\theoremstyle{plain}
\newtheorem{thm}{Theorem}
\numberwithin{thm}{section}
\newtheorem{lem}[thm]{Lemma}
\newtheorem{pro}[thm]{Proposition}
\newtheorem{remark}[thm]{Remark}
\newtheorem{con}[thm]{Conjecture}
\def\blfootnote{\xdef\@thefnmark{}\@footnotetext}
\begin{document}
\title{On certain maximal hyperelliptic curves related to Chebyshev polynomials}

\author{Saeed Tafazolian and Jaap Top
   }
\date{}
\address{University of Campinas (UNICAMP)\\
Institute of Mathematics, Statistics and Computer Science (IMECC)\\
Rua S\'{e}rgio Buarque de Holanda, 651, Cidade Universit\'{a}ria\\
13083-859, Campinas, SP, Brazil}
\address{Johan Bernoulli Institute for Mathematics and Computer Science\\
Nijenborgh~9\\9747 AG Groningen\\ the Netherlands}
\email{tafazolian@ime.unicamp.br}
\email{j.top@rug.nl}
  %\maketitle

%----------------------------------------------------------------------

   \begin{abstract}
 We study hyperelliptic curves arising from Chebyshev polynomials. The aim of this paper is to characterize the pairs $(q,d)$ such that the
 hyperelliptic curve $\cC$ over a
finite field $\FF_{q^2}$ given by $y^2 = \varphi_{d}(x)$  is maximal
over the finite field $\FF_{q^2}$ of cardinality $q^2$.
Here $\varphi_{d}(x)$ denotes the Chebyshev polynomial of degree $d$.
The same question is studied for the curves given by
 $y^2=(x\pm 2) \varphi_{d}(x)$, and also for
$y^2=(x^2-4)\varphi_d(x)$. Our results generalize some of the statements in \cite{KJW}.
   \end{abstract}

{\bf\em Keywords:} finite field, maximal curves, hyperelliptic curves,
 Chebyshev polynomials, Dickson polynomials.

 \emph{2000 Mathematics Subject Classification}: 11G20, 11M38, 14G15, 14H25.

\maketitle

\section{Introduction}

Let $p$ be an odd prime number, let $q$ be a power of $p$, and denote by $\FF_{q^2}$ the finite field with $q^2$ elements.
 Let $\mathcal{C}$ be a curve (complete, smooth, and
 geometrically irreducible) of
 genus $g \geq 0$ over the finite field $\FF_{q^2}$.  We call the curve $\cC$ maximal over  $\FF_{q^2}$ if the number of rational points of $\cC$ over  $\FF_{q^2}$ attains the upper bound of Hasse-Weil, i.e,
$$\#\mathcal{C}(\FF _{q^2})  = 1 + q^2 + 2g q.$$
Not only have maximal curves several intrinsic geometrical properties, but also they have been investigated in connection with Coding Theory:
in some cases the best known linear codes over finite fields of square order are obtained as one-point AG-codes from maximal curves.

In this note we consider  hyperelliptic curves given by
one of the equations $y^2 = \varphi_{d}(x)$  or
$y^2=(x\pm 2) \varphi_{d}(x)$ or
$y^2=(x^2-4)\varphi_d(x)$ over $\FF_{q^2}$. Here $\varphi_d(x)$ denotes
the Chebychev polynomial of degree $d$ over
$\FF_p\subset\FF_{q^2}$: recall that this is the reduction modulo $p$ of the unique polynomial $\phi(X) \in \ZZ[X]$ such that
\[
x^d+x^{-d}=\phi(x+x^{-1})
\]
in $\ZZ[x,x^{-1}]$.

\begin{remark}
\rm{Note that  $\varphi_d(x)=D_d(x,1)$ with $D_d$ the $d$-th Dickson polynomial of the first kind with parameter $1$, defined recursively by
\[D_n(x,1) =xD_{n-1}(x,1)-D_{n-2}(x,1)\]
for $n \geq 2$, and $D_0(x,1) = 2$ and $D_1(x,1) =x$.
 Dickson polynomials are related to the classical Chebyshev polynomials $T_n(x)$,  defined for each integer $ n \geq 0$ by $T_n(x)= \cos ( n ~ \arccos x)$; indeed we have that $D_n(x,1)=2T_n(x/2).$ Because of this connection, these Dickson polynomials are  also called Chebyshev polynomials (see \cite[Page 355]{LN}), a convention we follow here.
}
\end{remark}

In Lemma~\ref{sep} we describe the pairs $(q,d)$ such that
$\varphi_d(x)$ is a separable polynomial (over $\FF_q$).
Our main goal is to study the problem,
for which pairs $(q,d)$ the curve in question; so, given by one of the equations
$y^2=\varphi(x)$ or $y^2=(x\pm 2)\varphi_d(x)$ or $y^2=(x^2-4)\varphi_d(x)$, is maximal over $\FF_{q^2}$. 
Throughout, when we write ``curve with (affine) equation $y^2=f(x)$'' or even $C\colon y^2=f(x)$ we mean that we
consider the smooth, complete curve birational (over the ground field) to the curve given by the affine equation.We have the following results.

\begin{thm}\label{maineven}
  Let $d>0$ be an even integer and let $q$ be a prime power
  with $\gcd(q,d)=1$. Then the hyperelliptic curve $\cC$
  given by 
\[y^2=(x+2) \varphi_{d}(x)\] is maximal over $\FF_{q^2}$ if and only if either $q \equiv -1~(\bmod~4d) ~\mbox{or} ~ q\equiv 2d+1 ~ (\bmod~4d).$
\end{thm}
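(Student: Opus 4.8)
The plan is to turn maximality into a statement about the $q$‑Frobenius on the Jacobian, to replace $\cC$ by the superelliptic curve $\cD\colon Y^2=u(u^{2d}+1)$ via the substitution $x=u+u^{-1}$, and then to read off the answer from the complex‑multiplication description of $\mathrm{Jac}(\cD)$ in terms of Jacobi sums. Since $\varphi_d(x)\in\FF_p[x]$, the curve $\cC$ is defined over $\FF_q$; by the hypothesis $\gcd(q,d)=1$ together with Lemma~\ref{sep} the polynomial $(x+2)\varphi_d(x)$ is separable of degree $d+1$, so $\cC$ has genus $g=d/2$, and $\cC$ is maximal over $\FF_{q^2}$ exactly when $\#\cC(\FF_{q^2})=1+q^2+dq$, i.e. when the characteristic polynomial of the $q$‑Frobenius on $\mathrm{Jac}(\cC)$ equals $(T^2+q)^{d/2}$.

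Next I would perform the degree‑two base change $x=u+u^{-1}$. Using $\varphi_d(u+u^{-1})=u^d+u^{-d}$ and $u+u^{-1}+2=(u+1)^2/u$, a short computation (clearing denominators and putting $Y=u^{1+d/2}y/(u+1)$, which is possible because $d$ is even) identifies $\cC$ with $\cD\colon Y^2=u(u^{2d}+1)$. The hyperelliptic involution of $\cD$ together with a lift to $\cD$ of $u\mapsto u^{-1}$ generate a subgroup of $\mathrm{Aut}(\cD)$ isomorphic to $(\ZZ/2\ZZ)^2$ and defined over $\FF_q$, whose three nontrivial quotients are $\PP^1$, the curve $\cC$, and the curve $\cC'\colon y^2=(x-2)\varphi_d(x)$. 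Since $\varphi_d(-x)=\varphi_d(x)$ for even $d$, the substitution $x\mapsto-x$ exhibits $\cC'$ as the quadratic twist of $\cC$ by $-1$, so $\cC'\cong\cC$ over $\FF_{q^2}$. By Kani--Rosen one has $\mathrm{Jac}(\cD)\sim\mathrm{Jac}(\cC)\times\mathrm{Jac}(\cC')$ over $\FF_q$; combined with $\cC'\cong_{\FF_{q^2}}\cC$ this gives $\#\cD(\FF_{q^2})=2\#\cC(\FF_{q^2})-q^2-1$, and since $g(\cD)=d$ it follows that $\cC$ is maximal over $\FF_{q^2}$ if and only if $\cD$ is.

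It remains to decide maximality of $\cD\colon Y^2=u(u^{2d}+1)$ over $\FF_{q^2}$, and this is where the real work lies. The curve $\cD$ carries the cyclic automorphism group of order $4d$ generated by $(u,Y)\mapsto(\eta^2u,\eta Y)$ for a primitive $4d$‑th root of unity $\eta$, with quotient $\PP^1$ (equivalently, $u=s^2$ realizes $\cD$ as an \'etale double cover of $W^2=s^{4d}+1$). Hence $\mathrm{Jac}(\cD)$ is, up to isogeny over $\overline{\FF_q}$, a product of abelian varieties with complex multiplication by subfields of $\QQ(\zeta_{4d})$, on which a suitable power of the $q$‑Frobenius acts through Jacobi sums $J(\chi,\chi_2)$ with $\chi$ running over the relevant characters of order dividing $4d$ and $\chi_2$ the quadratic character; maximality of $\cD$ over $\FF_{q^2}$ becomes the requirement that each such Jacobi sum, raised to the power equal to the length of its Frobenius orbit, equals the corresponding power of $-q$. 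Evaluating these sums (by Stickelberger's theorem, or by appealing to the computations of \cite{KJW}), one checks that this holds precisely when $q\equiv-1\pmod{4d}$ or $q\equiv 2d+1\pmod{4d}$, and for no other residue class modulo $4d$; note that the first congruence forces $q\equiv3\pmod4$ and the second forces $q\equiv1\pmod4$, which helps organize the case analysis. The main obstacle is exactly this Gauss/Jacobi sum evaluation together with the verification that only these two of the square roots of $1$ modulo $4d$ actually arise.
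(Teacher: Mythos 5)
Your reduction step is in substance the paper's own argument: the substitution $x=u+u^{-1}$ identifies $\cC$ as the quotient of $\cD\colon Y^2=u^{2d+1}+u$ (the paper's curve $\cX$) by the involution $(u,Y)\mapsto(1/u,Y/u^{d+1})$, the twin quotient is $y^2=(x-2)\varphi_d(x)$, and because $d$ is even this twin is the quadratic twist of $\cC$ by $-1$, hence isomorphic to $\cC$ over $\FF_{q^2}$; Kani--Rosen (or the paper's explicit count of invariant differentials) then gives $L_{\cD}=L_{\cC}^2$ over $\FF_{q^2}$ and so $\cC$ is maximal over $\FF_{q^2}$ if and only if $\cD$ is. Two small points there: separability of $(x+2)\varphi_d(x)$ needs, besides Lemma~\ref{sep}, the observation that $\varphi_d(-2)=2\neq0$; and the double cover goes the other way round --- $W^2=s^{4d}+1$ is an \'etale double cover of $\cD$ via $(s,W)\mapsto(s^2,sW)$, not vice versa. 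Neither affects the argument.

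The genuine gap is the second half. Everything now hinges on the statement that $\cD\colon Y^2=u^{2d+1}+u$ is maximal over $\FF_{q^2}$ exactly when $q\equiv-1$ or $q\equiv 2d+1\pmod{4d}$. This is precisely Theorem~\ref{main}, which the paper imports from \cite{FF2} and which carries all the arithmetic content of the result. You neither cite it nor prove it: your plan is to decompose $\mathrm{Jac}(\cD)$ into CM factors and evaluate the relevant Jacobi sums via Stickelberger, but this evaluation --- including the verification that among all square roots of $1$ modulo $4d$ only the two stated residue classes occur --- is exactly what you yourself flag as ``the main obstacle'' and is nowhere carried out. The appeal to \cite{KJW} does not close it, since that paper treats only the genus-three cases (in effect $d=3$), not general even $d$. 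As written, the proposal therefore proves the equivalence ``$\cC$ maximal $\Leftrightarrow$ $\cD$ maximal'' but not the theorem; the gap is filled either by actually performing the Gauss/Jacobi-sum analysis for $Y^2=u^{2d+1}+u$ or, as the paper does, by invoking Theorem~\ref{main} from \cite{FF2}.
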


\begin{remark}\label{rem1.2}{\rm
The definition of the polynomials $\varphi_d$ implies
that $\varphi_d(-x)=(-1)^d\varphi_d(x)$.
As a consequence, for $d$ even and $q$ odd, using
a primitive $4$-th root of unity $i\in\FF_{q^2}$ one
obtains an isomorphism over $\FF_{q^2}$ given by
$(u,v)\mapsto (-u,iv)$ from the curve $C$ described above,
to the curve with equation $y^2=(x-2)\varphi_{d}(x)$.
Hence for this curve, the same maximality criteria over
$\FF_{q^2}$ hold as those described in Theorem~\ref{maineven}.

Another property that is immediate from the definition of
the polynomials $\varphi_d(x)$ is that if $d=a\cdot b$ for
positive integers $a,b$, then $\varphi_d(x)=
\varphi_a(\varphi_b(x))$.
Applying this in the situation of Theorem~\ref{maineven}
with $a=d/2$ and $b=2$, one obtains $\varphi_d(x)=
\varphi_{d/2}(x^2-2)$.
Writing the equation for $\cC$ as
$y^2=(x+2)\varphi_{d/2}(x^2-2)$, it already appears
in \cite[Proposition~3]{TTV}. In fact this will be
used in Section~\ref{five}.
}
\end{remark}

The analog of Theorem~\ref{maineven}
for odd $d$ is as follows.

\begin{thm}\label{x2mainodd}
Let $d\geq 1$ be an odd integer and let $q$ be a prime power
with $\gcd(q,2d)=1$. Then the hyperelliptic curve $\cC$
given by
\[
y^2=(x+2)\varphi_d(x)
\]
is maximal over $\FF_{q^2}$ if and only if $q\equiv -1
~(\bmod~2d)$.
\end{thm}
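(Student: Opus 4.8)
The plan is to apply the substitution $x=z+z^{-1}$ natural to Chebyshev polynomials and thereby reduce the maximality of $\cC$ to that of a Fermat-type curve. Pulling $\cC$ back along the degree-$2$ map $z\mapsto z+z^{-1}$ of $\PP^1$ (branched over $x=\pm2$), and using $z+z^{-1}+2=(z+1)^2/z$ together with $\varphi_d(z+z^{-1})=(z^{2d}+1)/z^d$, the affine equation becomes $y^2=(z+1)^2(z^{2d}+1)/z^{d+1}$. Since $d$ is odd, $z^{d+1}$ is a square, so after setting $Y=yz^{(d+1)/2}/(z+1)$ the pulled-back curve is $\cE\colon Y^2=z^{2d}+1$, of genus $d-1$; this cover is irreducible because $\varphi_d(x)/(x-2)$ is not a square in $\FF_{q^2}(x)$ (by Lemma~\ref{sep} the polynomial $\varphi_d$ is separable, and $\varphi_d(2)=2\neq0$).

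Next I would compare $\mathrm{Jac}(\cE)$ with $\mathrm{Jac}(\cC)$. The deck involution $\iota\colon z\mapsto z^{-1}$ of $\cE\to\cC$ commutes with the hyperelliptic involution $j\colon Y\mapsto-Y$ of $\cE$, so $\langle\iota,j\rangle\cong(\ZZ/2)^2$, with intermediate quotients $\cE/\iota=\cC$, $\cE/j\cong\PP^1$, $\cE/\iota j=\cC''$, and $\cE/\langle\iota,j\rangle\cong\PP^1$; a computation with the $\iota j$-invariant $v=y(z-z^{-1})$ gives $\cC''\colon v^2=(x-2)\varphi_d(x)$. The Kani--Rosen formula then yields $\mathrm{Jac}(\cE)\sim\mathrm{Jac}(\cC)\times\mathrm{Jac}(\cC'')$. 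Because $d$ is odd we have $\varphi_d(-x)=-\varphi_d(x)$, so $(x,v)\mapsto(-x,v)$ is an isomorphism $\cC\to\cC''$ over $\FF_{q^2}$; hence $\mathrm{Jac}(\cE)\sim\mathrm{Jac}(\cC)^2$, and $\cC$ is maximal over $\FF_{q^2}$ if and only if $\cE$ is. Thus it suffices to show that $\cE\colon Y^2=z^{2d}+1$ is maximal over $\FF_{q^2}$ precisely when $q\equiv-1\pmod{2d}$.

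For the ``if'' direction: if $q\equiv-1\pmod{2d}$ then, $q$ and $d$ being odd, $4d\mid q^2-1$, so $\FF_{q^2}$ contains $\eta$ with $\eta^{2d}=-1$, and $z\mapsto\eta z$ identifies $\cE$ over $\FF_{q^2}$ with $Y^2=1-z^{2d}$; the Hermitian curve $u^{q+1}+v^{q+1}=1$, which is maximal over $\FF_{q^2}$, maps onto this curve by $(u,v)\mapsto(u^{(q+1)/(2d)},v^{(q+1)/2})$, and maximality passes to images of nonconstant morphisms, so $\cE$ is maximal. For the ``only if'' direction I would evaluate the point count by character sums: writing $\chi$ for the quadratic character of $\FF_{q^2}$ and $e=\gcd(2d,q^2-1)$, one obtains
\[
\#\cE(\FF_{q^2})=1+q^2+\sum_{\psi}\psi(-1)\,J(\chi,\psi),
\]
where $\psi$ runs over the $e-2$ characters with $\psi^{2d}=\epsilon$ and $\psi\notin\{\epsilon,\chi\}$, and $J(\chi,\psi)=g(\chi)g(\psi)/g(\chi\psi)$ has absolute value $q$. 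Comparing with $1+q^2+2(d-1)q$ forces first $e=2d$ (that is, $2d\mid q^2-1$), and then $\psi(-1)J(\chi,\psi)=-q$ for every $\psi$ of order dividing $2d$ outside $\{\epsilon,\chi\}$; by Stickelberger's theorem on the purity of Gauss sums, this occurs exactly when $2d\mid q+1$.

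The substitution and the Kani--Rosen reduction are routine. I expect the main obstacle to be this last step --- showing that all the Gauss sums attached to characters of order dividing $2d$ equal $-q$ simultaneously exactly in the range $q\equiv-1\pmod{2d}$ --- which is where Stickelberger's congruences (via the Hasse--Davenport relation and the evaluation of pure Gauss sums) are needed. The degenerate case $d=1$, where $\cC$ is rational and hence trivially maximal while $q\equiv-1\pmod2$ holds automatically, should be treated separately.
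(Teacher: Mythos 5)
Your reduction is essentially the paper's own: the pulled-back curve $\cE\colon Y^2=z^{2d}+1$ is the paper's curve $\cY$ of Section~\ref{three}, your deck involution (which acts as $(z,Y)\mapsto(1/z,Y/z^{d})$, not merely $z\mapsto z^{-1}$) is the paper's $\sigma$, and the Kani--Rosen decomposition $\cJ(\cE)\sim\cJ(\cC)\times\cJ(\cC'')\sim\cJ(\cC)^2$, using $d$ odd to identify the second quotient $v^2=(x-2)\varphi_d(x)$ with $\cC$, is exactly how the paper gets $L_{\cY}=L_{\cC}^2$ and hence ``$\cC$ maximal $\Leftrightarrow$ $\cY$ maximal'' via Proposition~\ref{p2.1}. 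Where you genuinely diverge is the criterion for $\cE$ itself: the paper simply invokes Theorem~\ref{main2} (recalled from \cite{JPAA}), whereas you re-derive it. Your ``if'' direction, twisting by $\eta$ with $\eta^{2d}=-1$ (available since $4d\mid q^2-1$) and covering by the Hermitian curve via $(u,v)\mapsto(u^{(q+1)/(2d)},v^{(q+1)/2})$ together with Remark~\ref{r1}, is correct and makes that half self-contained. Your ``only if'' direction is only a sketch, and it is precisely the nontrivial content of Theorem~\ref{main2}: with your point-count formula each of the $e-2$ Jacobi-sum terms has absolute value $q$, so maximality forces $e=2d$ and every term equal to $+q$ (not $-q$ as written, since you must hit the Hasse--Weil \emph{upper} bound), and the assertion that this simultaneous purity of all Gauss/Jacobi sums of order dividing $2d$ happens exactly when $2d\mid q+1$ still has to be proved via Stickelberger/Hasse--Davenport; as it stands you have replaced a citation by an unproved claim of the same strength, so either carry that analysis out in full or just cite Theorem~\ref{main2} and drop it. Two small slips that do not affect the route: the $\tau\iota$-type invariant should be $y(z-z^{-1})/(x+2)$ (your $v$ satisfies $v^2=(x+2)^2(x-2)\varphi_d(x)$), and the two rational points of $\cE$ at infinity must be included in the count (your displayed formula does so only implicitly).
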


For the hyperelliptic curve given by $y^2=\varphi_d(x)$ our strongest
results are obtained in the case that $d$ is even:

\begin{thm}\label{evencheb}
Suppose $d>0$ is an even integer and $q$ is a prime power
with $\gcd(q,d)=1$.
Then the following statements are equivalent.
\begin{itemize}
\item[{\rm (i)}] the hyperelliptic curve $\cC_1\colon y^2= (x^2-4)\varphi_{d}(x)$ is maximal over $\FF_{q^2}$;
\item[{\rm (ii)}] $q\equiv -1~(\bmod~4)$ and the hyperelliptic curve $\cC\colon  y^2=\varphi_d(x)$
 is maximal over $\FF_{q^2}$;
 \item[{\rm (iii)}] $q\equiv -1~(\bmod~2d)$.
\end{itemize}
\end{thm}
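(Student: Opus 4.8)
\quad The plan is to establish (iii)$\Rightarrow$(i)\,\&\,(ii) directly, and (i)$\Rightarrow$(iii) and (ii)$\Rightarrow$(iii) by induction on the even integer $d$. The geometric device is the substitution $x=t+t^{-1}$, under which $\varphi_d(x)=t^d+t^{-d}=(t^{2d}+1)/t^d$. Thus the curve $\cX\colon w^2=t^{2d}+1$, of genus $d-1$, carries two commuting involutions $\iota\colon(t,w)\mapsto(t,-w)$ and $\sigma\colon(t,w)\mapsto(1/t,w/t^d)$; since $d$ is even one checks that $\cX/\langle\iota\rangle\cong\PP^1$, that $x=t+t^{-1}$ and $w/t^{d/2}$ generate the function field of $\cX/\langle\sigma\rangle$ --- which is therefore $\cC$ --- and that $x$ and $w(t-t^{-1})/t^{d/2}$ generate that of $\cX/\langle\sigma\iota\rangle$ --- which is $\cC_1$. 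Since one of the three intermediate quotients is rational, the standard decomposition of the Jacobian of a curve with a Klein four-group of automorphisms yields an isogeny $\mathrm{Jac}(\cX)\sim\mathrm{Jac}(\cC)\times\mathrm{Jac}(\cC_1)$ over $\FF_{q^2}$. I will pair this with the fact (classical, or re-derived below) that $w^2=t^{n}+1$ is maximal over $\FF_{q^2}$ exactly when $n\mid q+1$; with $n=2d$: \emph{$\cX$ is maximal over $\FF_{q^2}$ iff $2d\mid q+1$.}

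For (iii)$\Rightarrow$(i)\,\&\,(ii): if $2d\mid q+1$ then $\cX$ is maximal, hence so are its quotients $\cC$ and $\cC_1$ (all Frobenius eigenvalues of a quotient of a maximal abelian variety equal $-q$), and $4\mid 2d\mid q+1$. For the ``if'' half of the building block I would present $w^2=t^{2d}+1$ as an $\FF_{q^2}$-rational quotient of the maximal Fermat curve $a^{q+1}+b^{q+1}=1$ via $(a,b)\mapsto(\zeta a^{(q+1)/(2d)},b^{(q+1)/2})$ with $\zeta^{2d}=-1$ in $\FF_{q^2}$; such $\zeta$ exists since $4d\mid q^2-1$.

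For the converses, apply $\varphi_d(x)=\varphi_{d/2}(x^2-2)$ and the involution $x\mapsto-x$ (together with $(x,y)\mapsto(-x,-y)$) to $\cC$ and to $\cC_1$; writing $v=x^2-2$ this gives the descent isogenies over $\FF_{q^2}$
\[
\mathrm{Jac}(\cC)\sim\mathrm{Jac}\left(y^2=\varphi_{d/2}(v)\right)\times\mathrm{Jac}\left(y^2=(v+2)\varphi_{d/2}(v)\right),
\]
\[
\mathrm{Jac}(\cC_1)\sim\mathrm{Jac}\left(y^2=(v-2)\varphi_{d/2}(v)\right)\times\mathrm{Jac}\left(y^2=(v^2-4)\varphi_{d/2}(v)\right).
\]
So maximality of $\cC$ (resp.\ of $\cC_1$) forces maximality of $y^2=(v+2)\varphi_{d/2}(v)$ (resp.\ $y^2=(v-2)\varphi_{d/2}(v)$), to which Theorem~\ref{maineven} (if $4\mid d$) or Theorem~\ref{x2mainodd} (if $d\equiv2\pmod4$), together with Remark~\ref{rem1.2}, applies; and it also forces maximality of $y^2=\varphi_{d/2}(v)$ (resp.\ $y^2=(v^2-4)\varphi_{d/2}(v)$). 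When $4\mid d$, the inductive hypothesis applied to this second factor gives $q\equiv-1\pmod d$; intersecting with the congruence from the first factor excludes the parasitic class $q\equiv 2(d/2)+1\pmod{4(d/2)}$ of Theorem~\ref{maineven} (it would give $q\equiv1\pmod d$, impossible for $d\ge 4$), leaving $q\equiv-1\pmod{2d}$. When $d\equiv2\pmod4$ there is no inductive hypothesis for the second factor, but Theorem~\ref{x2mainodd} already gives $q\equiv-1\pmod d$ from the first, and only one extra fact is needed: $q\equiv-1\pmod 4$. For (ii) this is assumed; for (i) it follows from the identity $\varphi_m(x)^2-4=(x^2-4)v_m(x)^2$ with $v_m\in\ZZ[x]$, since then $(x,y)\mapsto(\varphi_m(x),y\,v_m(x))$ is a dominant $\FF_p$-morphism from $y^2=(x^2-4)\varphi_m(x)$ onto the elliptic curve $Y^2=X^3-4X$ (of $j$-invariant $1728$), which is maximal over $\FF_{q^2}$ precisely for $q\equiv-1\pmod 4$. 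As $\gcd(d,4)=2$ in this case, $q\equiv-1\pmod d$ and $q\equiv-1\pmod 4$ combine to $q\equiv-1\pmod{2d}$. The base case $d=2$ is immediate: $\cC$ has genus $0$, so (ii) reduces to $q\equiv-1\pmod 4$, while $\mathrm{Jac}(\cC_1)$ is then isogenous to $\cX\colon w^2=t^4+1$, maximal over $\FF_{q^2}$ iff $q\equiv-1\pmod 4$.

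The step I expect to be the genuine obstacle is the ``only if'' half of the building block: deducing $2d\mid q+1$ from maximality of $w^2=t^{2d}+1$ over $\FF_{q^2}$. All the rest reduces to bookkeeping with Theorems~\ref{maineven}--\ref{x2mainodd} and the above $j=1728$ fact, but this implication is genuinely arithmetic --- one must read off the exact divisibility from the statement that the Jacobi (Gauss) sums attached to the cyclotomic quotients of $w^2=t^{2d}+1$ all equal $-q$, where Stickelberger's theorem or the Gross--Koblitz formula (equivalently, an explicit Cartier-operator computation) enters. A secondary, routine, point is verifying at each level of the converse induction that the two congruences supplied by the two Jacobian factors are jointly satisfiable only when $q\equiv-1\pmod{2d}$.
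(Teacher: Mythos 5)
Your argument is correct and, at its core, follows the same skeleton as the paper's: your curve $w^2=t^{2d}+1$ with the involutions $\sigma$ and $\sigma\iota$, the invariant functions $x=t+t^{-1}$, $w/t^{d/2}$, $w(t-t^{-1})/t^{d/2}$, and the resulting splitting $\cJ\sim\cJ(\cC)\times\cJ(\cC_1)$ are exactly Lemma~\ref{mainevench}; and your converse direction, based on $\varphi_d(x)=\varphi_{d/2}(x^2-2)$ and the two covers $(x,y)\mapsto(x^2-2,y)$ and $(x,y)\mapsto(x^2-2,xy)$ combined with Theorems~\ref{maineven} and \ref{x2mainodd}, is exactly the mechanism of Propositions~\ref{2mod4} and \ref{0mod4}. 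The step you single out as the ``genuine obstacle'' --- deducing $2d\mid q+1$ from maximality of $w^2=t^{2d}+1$ --- is not an obstacle here at all: this is precisely Theorem~\ref{main2}, recalled in Section~\ref{three} from \cite{JPAA}, so you may simply cite it (your Hermitian-quotient construction for the ``if'' half is fine but redundant for the same reason), and no Stickelberger or Gross--Koblitz computation is needed. Two points where you genuinely deviate, both legitimately: (a) to extract $q\equiv-1\pmod 4$ from maximality of $\cC_1$ you use the Dickson identity $\varphi_m(x)^2-4=(x^2-4)v_m(x)^2$ to write down an explicit morphism onto $Y^2=X^3-4X$, whereas the paper obtains the same congruence from Lemma~\ref{Lpols} via a differential/subcover argument; your route is more explicit and in effect reproves the relevant piece of that lemma. (b) In the case $4\mid d$ you argue by induction on $d$, eliminating the class $q\equiv d+1\pmod{2d}$ against the inductively obtained $q\equiv-1\pmod d$; this works (the only collision would force $d\mid 2$), but the paper avoids induction by invoking the elliptic-curve congruence in that case as well, which is slightly shorter. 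Your base case $d=2$ and the bookkeeping of congruences (including the use of Remark~\ref{rem1.2} when $d/2$ is even, and the trivial sign change when $d/2$ is odd) are all in order, so modulo quoting Theorem~\ref{main2} your proposal is a complete proof.
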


For odd $d>0$  we have the following somewhat weaker result.
\begin{thm}\label{mainodd}
  Let $d>0$ be an odd integer and let $q$ be a
  prime power. Assume that $q$ is coprime to $2d$.
  If $q \equiv -1(\bmod~4d)$ or $q\equiv 2d+1 (\bmod~4d)$, then the curve $\cC\colon y^2= \varphi_{d}(x)$ is maximal over $\FF_{q^2}$,
  and so is the curve $\cC_1\colon  y^2=(x^2-4)\varphi_d(x)$.
 If both $\cC$ and $\cC_1$ are maximal over
 $\FF_{q^2}$, then either
 $q \equiv -1(\bmod~4d)$ or $q\equiv 2d+1 (\mbox{mod}~4d)$.
\end{thm}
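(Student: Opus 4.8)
The plan is to reduce the whole statement to a single auxiliary curve. Writing $x=t+t^{-1}$ and using $\varphi_{d}(t+t^{-1})=t^{d}+t^{-d}$, one has $\varphi_{d}(x)=t^{-d}(t^{2d}+1)$; since $d$ is odd, $t^{-d}$ is $t^{-1}$ times a square in $\FF_{q^2}(t)$, so $\sqrt{\varphi_{d}(x)}$ and $\sqrt{t^{2d+1}+t}$ generate the same extension of $\FF_{q^2}(t)$. Hence the genus-$d$ hyperelliptic curve $\mathcal{N}\colon w^{2}=u^{2d+1}+u$ is a $(\ZZ/2\ZZ)^{2}$-cover of $\PP^{1}$ whose three intermediate double covers are $\cC\colon y^{2}=\varphi_{d}(x)$, the curve $\cC_{1}\colon y^{2}=(x^{2}-4)\varphi_{d}(x)$, and the rational curve $y^{2}=x^{2}-4$ (as $q$ is coprime to $2d$, the polynomial $(x^{2}-4)\varphi_{d}(x)$ is separable, so the cover is genuinely of degree $4$). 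By the Kani--Rosen decomposition $\mathrm{Jac}(\mathcal{N})\sim\mathrm{Jac}(\cC)\times\mathrm{Jac}(\cC_{1})$, so $\cC$ and $\cC_{1}$ are simultaneously maximal over $\FF_{q^2}$ if and only if $\mathcal{N}$ is. Thus the theorem is equivalent to: $\mathcal{N}$ is maximal over $\FF_{q^2}$ precisely when $q\equiv-1$ or $q\equiv 2d+1\pmod{4d}$.

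Next I would reformulate the congruence. Since $d$ is odd, "$q\equiv-1$ or $q\equiv 2d+1\pmod{4d}$" is the same as "$q\equiv-1\pmod{4}$ and $q\equiv\pm1\pmod{2d}$". For the subcase $q\equiv-1\pmod{2d}$ no new input is needed: by Theorem~\ref{x2mainodd} the curve $\cC'\colon y^{2}=(x+2)\varphi_{d}(x)$ is then maximal, and applying $x=u^{2}-2$ (recall $\varphi_{2d}(u)=\varphi_{d}(u^{2}-2)$) to $\hat{\cC}\colon y^{2}=(u^{2}-4)\varphi_{2d}(u)$ together with Kani--Rosen gives $\mathrm{Jac}(\hat{\cC})\sim\mathrm{Jac}(\cC')\times\mathrm{Jac}(\cC_{1})$ (the $\sigma$-quotient being $y^2=(x-2)\varphi_d(x)$, isomorphic to $\cC'$ via $x\mapsto-x$); since Theorem~\ref{evencheb} applied to the even integer $2d$ makes $\hat{\cC}$ maximal exactly when $q\equiv-1\pmod{4d}$, it follows that $\cC_{1}$ is maximal iff $q\equiv-1\pmod4$. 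Treating $\cC$ in the same way via $\tilde{\cC}\colon y^{2}=\varphi_{2d}(u)$, for which $\mathrm{Jac}(\tilde{\cC})\sim\mathrm{Jac}(\cC)\times\mathrm{Jac}(\cC')$, one concludes that when $q\equiv-1\pmod{2d}$ both $\cC$ and $\cC_{1}$ are maximal precisely when $q\equiv-1\pmod4$, as required.

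It remains to treat $q\not\equiv-1\pmod{2d}$, i.e. to show that $\mathcal{N}$ is maximal over $\FF_{q^2}$ iff $q\equiv 2d+1\pmod{4d}$. Here I would count $\#\cC(\FF_{q^2})$ by lifting points along $x=t+t^{-1}$: for $x\neq\pm2$ the two preimages of $x$ lie in $\FF_{q^2}^{*}$ when $x^{2}-4$ is a square and in the norm-one subgroup $\mu_{q^{2}+1}\subset\FF_{q^{4}}^{*}$ otherwise, and $t\mapsto t^{d}$ permutes $\mu_{q^{2}+1}$ because $\gcd(d,q^{2}+1)=1$ whenever $q\equiv\pm1\pmod{2d}$. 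This yields
\[
\sum_{x\in\FF_{q^2}}\eta\bigl(\varphi_{d}(x)\bigr)=\tfrac12\bigl(S+T\bigr),\qquad S=\#\mathcal{N}(\FF_{q^2})-q^{2}-1,\quad T=\sum_{t\in\mu_{q^{2}+1}}\eta(t+t^{-1}),
\]
with $\eta$ the quadratic character of $\FF_{q^2}$; a short computation (relying on the fact that $y^{2}=x^{3}-4x$ is maximal over $\FF_{q^2}$ exactly when $q\equiv-1\pmod{4}$) shows $T=-2q$ when $q\equiv-1\pmod{4}$ and $T\neq-2q$ otherwise, so everything reduces to deciding when $S=2dq$, that is, when $\mathcal{N}$ itself is maximal. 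This is the heart of the matter: I would analyze $\mathrm{Jac}(\mathcal{N})$ through its order-$4d$ automorphism $(u,w)\mapsto(\zeta_{2d}u,\zeta_{4d}w)$, which decomposes it up to isogeny into abelian varieties with complex multiplication whose Frobenius eigenvalues over $\FF_{q^2}$ are products of Gauss sums attached to characters of order $4d/\gcd(2a+1,d)$ — all divisible by $4$ — and then invoke the classical evaluation of such Gauss sums over $\FF_{q^2}$ to show they are all equal to $-q$ exactly when $q\equiv 2d+1\pmod{4d}$ (an induction on the divisors of $d$, using that $\cC$ and $\cC_1$ for a divisor $d'\mid d$ are dominated by those for $d$, reduces this to prime-power $d$). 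I expect this last, Gauss-sum, step — in particular getting the signs exactly right — to be the only genuinely delicate point of the argument.
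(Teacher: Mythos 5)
Your opening reduction is exactly the paper's: your curve $\mathcal{N}\colon w^2=u^{2d+1}+u$ is the paper's $\cX$, the three quadratic subextensions of $\FF_{q^2}(t)/\FF_{q^2}(x)$ you list correspond to the quotients of $\cX$ by the involutions $\tau$ and $\tau\iota$, and the resulting isogeny $\cJ(\cX)\sim\cJ(\cC)\times\cJ(\cC_1)$ (genera $(d-1)/2+(d+1)/2=d$) together with Proposition~\ref{p2.1} does reduce the theorem to the single assertion that $\cX$ is maximal over $\FF_{q^2}$ exactly when $q\equiv-1$ or $q\equiv 2d+1\pmod{4d}$. At this point the paper simply quotes that assertion: it is Theorem~\ref{main}, i.e.\ \cite[Theorem~1]{FF2}, and the proof is finished.

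You do not use that known result, and this is where your proposal has a genuine gap. Your treatment of the subcase $q\equiv-1\pmod{2d}$ via Theorems~\ref{x2mainodd} and \ref{evencheb} is essentially sound (and not circular in substance, since \ref{evencheb} rests on \ref{maineven}, \ref{x2mainodd}, Theorem~\ref{main2} and Lemmas~\ref{mainevench}, \ref{Lpols}, not on \ref{mainodd}; but \ref{evencheb} is proved only in Section~\ref{five}, so you would have to say this explicitly), yet it covers only part of what is needed. The whole remaining content --- maximality of $\cX$ (hence of $\cC$ and $\cC_1$) when $q\equiv 2d+1\pmod{4d}$, and the exclusion of all classes with $q\not\equiv\pm1\pmod{2d}$ in the converse --- is delegated to the final sketch, which is not a proof: you say you \emph{would} decompose $\cJ(\mathcal{N})$ under the order-$4d$ automorphism and evaluate the attached Gauss/Jacobi sums, and you yourself flag the sign determination as the unresolved ``delicate point''. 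That evaluation \emph{is} Theorem~\ref{main}; nothing in your text establishes it. Moreover the intermediate count with $S$ and $T$ adds nothing beyond the Jacobian decomposition you already have, and its hypothesis $\gcd(d,q^2+1)=1$ is justified by $q\equiv\pm1\pmod{2d}$, i.e.\ by the very congruence the converse direction is supposed to produce. Either cite Theorem~\ref{main} (then your first paragraph alone reproduces the paper's argument) or carry out the Gauss-sum evaluation in full; as written, the central implication is missing.
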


Based on considering small cases and experiments
using Magma (see also the discussion in Remark~\ref{rem4.3} and the special case based on a result of Kohel and Smith
which we discuss in Remark~\ref{kohelsmithresult}), we in fact have a stronger expectation for odd $d>0$:
\begin{con}\label{conj}
For any prime power $q$ and any odd $d>0$ with $\gcd(q,2d)=1$,
the following statements are equivalent.
\begin{itemize}
\item[{\rm (i)}] the hyperelliptic curve $\cC_1\colon y^2= (x^2-4)\varphi_{d}(x)$ is maximal over $\FF_{q^2}$;
\item[{\rm (ii)}] $q\equiv -1(\bmod~4)$ and the hypereliptic curve $\cC\colon  y^2=\varphi_d(x)$
 is maximal over $\FF_{q^2}$.
\end{itemize}
\end{con}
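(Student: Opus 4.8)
The plan is to reduce the equivalence to the maximality of a single elliptic curve, by establishing the isogeny
\[
\mathrm{Jac}(\cC_1)\ \sim\ \mathrm{Jac}(\cC)\times E\quad\text{over }\FF_{q^2},\qquad E\colon\ v^2=u^4+1 .
\]
Granting this, Conjecture~\ref{conj} follows at once: since then $L_{\cC_1}(T)=L_\cC(T)\,L_E(T)$, the curve $\cC_1$ is maximal over $\FF_{q^2}$ if and only if both $\cC$ and $E$ are; and $E$ has $j$-invariant $1728$ and complex multiplication by $\ZZ[i]$, hence is maximal over $\FF_{q^2}$ exactly when $q\equiv-1~(\bmod~4)$ (for $q\equiv 3~(\bmod~4)$ it is supersingular over $\FF_q$, so $\pi_E=-q$ over $\FF_{q^2}$; for $q\equiv 1~(\bmod~4)$ it is ordinary over $\FF_q$ and its trace over $\FF_{q^2}$ equals $a_q^2-2q\neq-2q$). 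This also squares with Theorem~\ref{mainodd}: each of the congruences $q\equiv-1$ and $q\equiv 2d+1~(\bmod~4d)$ forces $q\equiv-1~(\bmod~4)$, and they describe precisely the case in which $\cC$, $\cC_1$ and $E$ are simultaneously maximal.

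For the isogeny I would set $x=s^2+s^{-2}$, so that $x-2=(s-s^{-1})^2$, $x+2=(s+s^{-1})^2$ and $\varphi_d(x)=s^{2d}+s^{-2d}=(V/s^{d})^2$ whenever $V^2=s^{4d}+1$. Then the curve
\[
\cX\colon\ V^2=s^{4d}+1
\]
has function field $\FF_{q^2}(x)\bigl(\sqrt{x-2},\sqrt{x+2},\sqrt{\varphi_d(x)}\bigr)$, a $(\ZZ/2)^3$-cover of $\PP^1_x$ of degree $8$ (the three square roots are independent because $s^{4d}+1$ is a squarefree polynomial of positive degree). Among its seven quadratic subcovers three have genus $0$ and the others are $\cC$, $\cC^+\colon y^2=(x+2)\varphi_d(x)$, $\cC^-\colon y^2=(x-2)\varphi_d(x)$ and $\cC_1$; since $d$ is odd, $(u,v)\mapsto(-u,v)$ is an $\FF_{q^2}$-isomorphism $\cC^+\to\cC^-$. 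Hence $\mathrm{Jac}(\cX)\sim\mathrm{Jac}(\cC)\times\mathrm{Jac}(\cC^+)^2\times\mathrm{Jac}(\cC_1)$, the factors being the isotypic components of $\mathrm{Jac}(\cX)$ for the relevant characters of $(\ZZ/2)^3$.

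Next I would bring in the rotations $\rho_\zeta\colon(s,V)\mapsto(\zeta s,V)$ ($\zeta^{4d}=1$) and the involution $\sigma\colon(s,V)\mapsto(s^{-1},Vs^{-2d})$ of $\cX$. Suppose first that $4d\mid q^2-1$. Then the Tate module of $\mathrm{Jac}(\cX)$ splits over $\FF_{q^2}$ into the $4d-2$ lines $L_a$ ($a\not\equiv0,2d~(\bmod~4d)$) on which $\rho_\zeta$ acts by $\zeta^a$; Frobenius acts on $L_a$ by a scalar $\beta_a$ (a Jacobi sum, whose shape will not be needed), and $\sigma$ carries $L_a$ isomorphically onto $L_{2d-a}$ while commuting with Frobenius, so $\beta_a=\beta_{2d-a}$. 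Matching the $\mu_{4d}$-grading against the $(\ZZ/2)^3$-action one finds that for each odd $a\notin\{d,3d\}$ the pair $\{L_a,L_{2d-a}\}$ contributes, as its two $\sigma$-eigenlines, one line to $\mathrm{Jac}(\cC)$ and one to $\mathrm{Jac}(\cC_1)$, each carrying the same Frobenius eigenvalue $\beta_a$; whereas the two $\sigma$-fixed lines $L_d$, $L_{3d}$, on which $\rho_\zeta$ acts through a character of order $4$, lie entirely in the $\cC_1$-component. Since the morphism $\cX\to E$, $(s,V)\mapsto(s^d,V)$, is defined over $\FF_{q^2}$ and identifies $E$ with $\cX/\mu_d$, one has $L_d\oplus L_{3d}\sim\mathrm{Jac}(E)$ over $\FF_{q^2}$; assembling the pieces gives $L_{\cC_1}(T)=L_\cC(T)\,L_E(T)$.

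The obstacle — and the reason for stating this only as a conjecture — is the case $4d\nmid q^2-1$. Then the lines $L_a$ are defined only over a proper extension of $\FF_{q^2}$, $\mathrm{Frob}_{q^2}$ permutes them within its orbits on the exponents, and when a $\sigma$-pair $\{L_a,L_{2d-a}\}$ lies inside a single $\mathrm{Frob}_{q^2}$-orbit its two contributions to $\mathrm{Jac}(\cC)$ and $\mathrm{Jac}(\cC_1)$ acquire Frobenius eigenvalues that differ by a sign; the corresponding factor of $L_{\cC_1}$ is then the factor of $L_\cC$ with its reciprocal roots negated, and the clean isogeny above fails. To recover the statement one must show that in these configurations the eigenvalues cannot all equal $-q$ — i.e.\ that neither $\cC$ nor $\cC_1$ is maximal there — and doing this uniformly in $d$ is the point I do not see how to settle. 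With the order-$4$ piece $E$ and the pieces $\cC^{\pm}$ (Theorem~\ref{x2mainodd}) already under control, this Frobenius-orbit bookkeeping is all that is missing; it can be checked directly for small $d$, and a further special case follows from the Kohel–Smith result discussed in Remark~\ref{kohelsmithresult}.
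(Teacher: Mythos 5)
The statement you are addressing is Conjecture~\ref{conj}, which the paper itself does not prove; so the gap you candidly point out --- the case $4d\nmid q^2-1$ --- is a genuine gap, and it is exactly the point at which the authors are stuck as well. Your reduction of the conjecture to the $L$-polynomial identity $L_{\cC_1}(t)=L_{\cC}(t)\,L_{E}(t)$ over $\FF_{q^2}$, with $E$ a $j$-invariant~$1728$ curve with CM by $\ZZ[i]$ (over $\FF_{q^2}$ your $v^2=u^4+1$ is isogenous to the paper's $\cE\colon y^2=x^3+x$), is precisely the target the paper identifies: Remark~\ref{kohelsmithresult} deduces this identity from Kohel--Smith when $d$ is an odd prime, obtains the isogeny $\cJ(\cX)\sim\cE\times\cJ(\cC)\times\cJ(\cC)$ only over $\overline{\FF_q}$ for composite odd $d$ (via Kani--Rosen/Paulhus for the dihedral group $\langle\rho^4,\tau\rangle$), and Remark~\ref{rem4.3} shows why the natural endomorphisms in $\ZZ[\rho,\tau]$ cannot force the descent of this splitting to $\FF_{q^2}$. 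Your ``Frobenius-orbit bookkeeping'' obstruction and the paper's descent obstruction are the same missing ingredient; neither argument closes it, which is why the statement remains a conjecture.

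Where your route differs, it is a useful complement. The paper decomposes $\cJ(\cX)$ for $\cX\colon y^2=x^{2d+1}+x$ using $\tau$ and the order-$4d$ automorphism $\rho$, whereas you work with $V^2=s^{4d}+1$ viewed as a $(\ZZ/2\ZZ)^3$-cover of the $x$-line, obtaining over the base field the decomposition $\cJ\sim\cJ(\cC)\times\cJ(\cC^+)^2\times\cJ(\cC_1)$ and then doing $\mu_{4d}$-eigenline bookkeeping; your treatment of the split case $4d\mid q^2-1$ (each $L_a$ Frobenius-stable, $\sigma(L_a)=L_{2d-a}$ giving $\beta_a=\beta_{2d-a}$, and $L_d\oplus L_{3d}$ accounted for by $E=\cX/\mu_d$) looks correct and covers instances the paper only verifies for $d$ prime, $d\in\{1,3\}$, and numerically for $d\in\{9,15,21\}$, $q<100$. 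Two small repairs: the independence of the three square roots should be justified by the separability of $(x^2-4)\varphi_d(x)$ (Lemma~\ref{sep} together with $\varphi_d(\pm2)=\pm2$), not by squarefreeness of $s^{4d}+1$ alone; and for $q\equiv1\pmod 4$ with $p\equiv3\pmod4$ (so $q$ an even power of $p$) the curve $E$ is supersingular, not ordinary, over $\FF_q$ --- the stated criterion survives because in all cases $E$ is maximal over $\FF_{q^2}$ if and only if $a_q=0$, which for this CM curve happens exactly when $q\equiv3\pmod4$.
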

 Clearly if Conjecture~\ref{conj} holds then a more complete
 and simple criterion follows (using Theorem~\ref{mainodd} and
 similar to  Theorem~\ref{evencheb}).

In Sections~\ref{two} and \ref{three}
some necessary background is recalled and a
general necessary condition on the characteristic
is shown (Proposition~\ref{p2.3}) in order for
a hyperelliptic curve with
equation $y^2=xg(x^2)$ over $\FF_{q^2}$ (of positive genus) to be maximal.
Section~\ref{proofs} contains the proofs
of most results announced in this introduction.
In Section~\ref{five} we prove Theorem~\ref{evencheb}
and discuss Conjecture~\ref{conj}. We finish with a small
application/illustration of Complex Multiplication theory (Proposition~\ref{CMresult}).

 \section{Preliminaries}\label{two}
  The \emph{zeta function} of a curve $\mathcal{C}$
  over a finite field $k$ of cardinality $q$ is a
 rational function of the form
\[Z(\mathcal{C}/k)=\dfrac{L(t)}{(1-t)(1-qt)},\]
\noindent where $L(t) \in \ZZ[t]$ is a polynomial of degree $2g=2\cdot\mbox{genus}(\cC)$
with integral coefficients (see \cite[Chapter V]{St}).
 We call this polynomial the $L$-\textit{polynomial} of $\mathcal{C}$ over $k$.

\vspace{.2cm}

 We recall the following fact about maximal curves which can be deduced by extending the argument on p. 182 of \cite{St}.

\begin{pro}\label{p2.1}
Suppose $q$ is a square. For a smooth projective curve $\mathcal{C}$
of genus $g$, defined over $ k=\FF_{q}$, the following conditions
are equivalent:
\begin{itemize}
\item $\mathcal{C}$ is maximal   over $\FF_{q}$.
 \item $L(t)= (1+\sqrt{q}t)^{2g} $.
\end{itemize}
\end{pro}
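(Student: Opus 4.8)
The plan is to use the Weil "explicit formula" relating the number of rational points to the reciprocal roots of $L(t)$. Write $L(t)=\prod_{j=1}^{2g}(1-\alpha_j t)$, where by the Riemann Hypothesis for curves (Weil) each $\alpha_j$ has absolute value $\sqrt q$, and the multiset $\{\alpha_j\}$ is stable under $\alpha\mapsto q/\alpha=\bar\alpha$. The standard point-count formula gives $\#\cC(\FF_q)=q+1-\sum_{j=1}^{2g}\alpha_j$. Hence
\[
\#\cC(\FF_q)=q+1+2gq \iff -\sum_{j=1}^{2g}\alpha_j = 2gq = \sum_{j=1}^{2g}\sqrt q\,.
\]
So maximality is equivalent to $\sum_j(\alpha_j+\sqrt q)=0$.

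The key step is then the following elementary observation: since $|\alpha_j|=\sqrt q$ for each $j$, we have $\mathrm{Re}(\alpha_j)\ge -\sqrt q$, with equality precisely when $\alpha_j=-\sqrt q$. Taking real parts of the displayed identity, $\sum_j\big(\mathrm{Re}(\alpha_j)+\sqrt q\big)=0$ is a sum of non-negative real numbers equal to zero, forcing $\mathrm{Re}(\alpha_j)=-\sqrt q$ for all $j$, and therefore $\alpha_j=-\sqrt q$ for all $j$. This yields $L(t)=\prod_{j}(1-\alpha_j t)=(1+\sqrt q\,t)^{2g}$. Conversely, if $L(t)=(1+\sqrt q\,t)^{2g}$ then all $\alpha_j=-\sqrt q$ and $\#\cC(\FF_q)=q+1-\sum_j\alpha_j=q+1+2g\sqrt q$; since $q$ is a square, $\sqrt q\in\ZZ$ and the Hasse--Weil upper bound $q+1+2g\sqrt q$ is attained, so $\cC$ is maximal.

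I expect no serious obstacle here; the only points requiring care are citing the correct normalization of $Z(\cC/k)$ and of the point-count formula from \cite[Chapter~V]{St} (so that the reciprocal roots of $L$ are exactly the Frobenius eigenvalues of absolute value $\sqrt q$), and noting that the hypothesis "$q$ is a square" is what makes $q+1+2g\sqrt q$ the genuine integral Hasse--Weil bound rather than merely an upper estimate. The positivity argument on real parts is the heart of the matter and is completely routine. One could alternatively invoke the factorization of $L(t)$ over $\ZZ$ together with the fact that each irreducible factor has roots of absolute value $q^{-1/2}$, but the real-part argument above is the cleanest and is essentially the one indicated by the reference to p.~182 of \cite{St}.
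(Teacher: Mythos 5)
Your argument is correct and is essentially the proof the paper has in mind: the paper gives no proof of its own, only pointing to the argument on p.~182 of Stichtenoth, which is exactly this eigenvalue/real-part computation (maximality forces every Frobenius eigenvalue $\alpha_j$ with $|\alpha_j|=\sqrt{q}$ to equal $-\sqrt{q}$). The only blemish is the typo $2gq$ in your displayed equivalence, which should read $2g\sqrt{q}$, as your own subsequent equality $\sum_j \sqrt{q}$ makes clear.
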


\vspace{.2cm}

 A common method to construct (explicit) maximal curves is via the following remark  which although commonly attributed to J-P. Serre (cf. Lachaud \cite{La}), is implicitly already contained in Tate's seminal paper \cite{Tate}:

\begin{remark}\label{r1}
\rm{Given a non-constant morphism $f\colon\mathcal{C} \longrightarrow
\mathcal{D}$ defined over
the finite field $k$, the $L$-polynomial of $\mathcal{D}$
over $k$ divides
the one of $\mathcal{C}$ over $k$. Hence a subcover $\mathcal{D}$ over $\FF_{q^2}$ of a
maximal curve $\mathcal{C}$ over $\FF_{q^2}$ is also maximal.}

Many examples of maximal curves have been found in this way starting from `standard' known ones.
In various cases this is done including explicit equations for the subcover, in other cases by
merely identifying appropriate subfields (and the genus of the corresponding curve) of
a function field $\FF_{q^2}(\mathcal{C})$ of a maximal $\mathcal{C}/\FF_{q^2}$.
From the abundant literature on this, we mention \cite{GSX}, \cite{AQ}, \cite{CO}, \cite{ABB},
\cite{FF2}, \cite{JPAA}, \cite{SFN}, \cite{GQZ}, \cite{GMQZ}, \cite{BM}, \cite{ASF}.

In the present paper we work in some sense `the other way around': the curves we study are
indeed subcovers $\mathcal{D}$ (by an morphism of degree $2$) of  curves $\mathcal{C}$ for which maximality
properties are precisely known. By identifying the $L$-polynomial of $\mathcal{C}$ essentially in terms of
that of $\mathcal{D}$ in the cases at hand, which is done by `understanding' up to isogeny
the Jacobian variety of $\mathcal{C}$ in terms of that of $\mathcal{D}$,
we obtain necessary (and not only sufficient) maximality criteria for $\mathcal{D}$. 
  \end{remark}

The following result yields a necessary condition
for maximality of a special type of hyperelliptic
curves.
\begin{pro}\label{p2.3}
Let $q=p^n$ be the cardinality of a finite field
$\FF_q$ of characteristic $p>2$.
Suppose $g(x)\in\FF_q[x]$ is separable 
of degree $d\geq 1$, and $g(0)\neq 0$. Let $\cC$ be the hyperelliptic curve
over $\FF_q$ with equation
$y^2=xg(x^2)$.\\
If the Jacobian of $\cC$ is supersingular, then $p\equiv 3~(\bmod~4)$.\\
As a consequence, if $\cC$ is maximal over $\FF_{q^2}$, then
$p\equiv 3~(\bmod~4)$.
\end{pro}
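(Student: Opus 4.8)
The plan is to show that if $\cC\colon y^2 = xg(x^2)$ has supersingular Jacobian, then the curve admits an automorphism of order $4$ (up to sign, induced by $x\mapsto -x$ together with a scaling of $y$), and that the presence of such an automorphism forces $p\equiv 3\ (\bmod\ 4)$ because otherwise one produces a factor in the Jacobian that cannot be supersingular in characteristic $p\equiv 1\ (\bmod\ 4)$. More concretely, first I would write $f(x)=xg(x^2)$ and observe that $f(-x)=-f(x)$, so the map $\sigma\colon (x,y)\mapsto(-x, y)$ sends the curve $y^2=f(x)$ to $y^2=-f(x)$; over $\FF_q$ (or a quadratic extension) this is only an automorphism after also adjusting $y$ by a square root of $-1$. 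The point $(0,0)$ is a Weierstrass point fixed by $x\mapsto -x$, so the quotient curve and the relevant quadratic twist are governed by whether $-1$ is a square in $\FF_q$.

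The key step is a decomposition of the Jacobian. Set $h(x) = g(x)$ of degree $d$; the substitution $x^2 = u$ exhibits two quotient curves: $\cC_1\colon v^2 = u\, g(u)$ (via $(x,y)\mapsto (x^2, xy)$, since $y^2 = x^2 g(x^2)$ would need $f=x^2g(x^2)$ — here one must be a bit careful, the right quotients are $v^2 = u g(u)$ and $w^2 = u g(u)$ twisted, coming from the two intermediate fields of $\FF_q(x,y)/\FF_q(x^2)$). Up to isogeny over $\FF_q$ one gets $\mathrm{Jac}(\cC)\sim \mathrm{Jac}(\cC_1)\times \mathrm{Jac}(\cC_1')$ where $\cC_1'$ is the quadratic twist of $\cC_1$ by $-1$ (equivalently, $\cC_1$ over $\FF_q(\sqrt{-1})$). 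If $\mathrm{Jac}(\cC)$ is supersingular, both factors are supersingular. Now if $p\equiv 1\ (\bmod\ 4)$ then $-1$ is a square in $\FF_p\subseteq\FF_q$, so $\cC_1'\cong\cC_1$ over $\FF_q$ and this gives no contradiction by itself — so instead the argument must use the Weierstrass point at $(0,0)$ more carefully: the automorphism $\tau\colon(x,y)\mapsto(-x, \zeta y)$ with $\zeta^2=-1$ has order $4$ and its square is the hyperelliptic involution, so $\langle\tau\rangle$ acts on $\mathrm{Jac}(\cC)$ giving it an action of $\ZZ[i]$; if $-1$ is already a square in $\FF_q$ then $i\in\FF_q$ and this $\ZZ[i]$-action is defined over $\FF_q$, making $\mathrm{Jac}(\cC)$ an abelian variety with $\ZZ[i]$-multiplication. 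A supersingular abelian variety in characteristic $p$ can only have CM by $\ZZ[i]$-type orders in a way compatible with the Frobenius when $p$ is inert or ramified in $\ZZ[i]$, i.e. when $p\equiv 3\ (\bmod\ 4)$ or $p=2$; since $p>2$ this yields $p\equiv 3\ (\bmod\ 4)$.

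So the steps, in order, are: (1) record $f(-x)=-f(x)$ and identify the order-$4$ automorphism $\tau$ with $\tau^2$ the hyperelliptic involution; (2) note $\tau$ is defined over $\FF_q$ precisely when $\sqrt{-1}\in\FF_q$, i.e. when $p\equiv 1\ (\bmod\ 4)$ (using $q=p^n$ and that $p\equiv 1\ (\bmod\ 4)$ makes $-1$ a square in $\FF_p$, while $p\equiv 3\ (\bmod\ 4)$ makes $-1$ a non-square in $\FF_p$ — one should be slightly careful about even $n$, but maximality over $\FF_{q^2}$ with $q$ itself a prime power is what we use); (3) in the case $p\equiv 1\ (\bmod\ 4)$, deduce $\ZZ[i]\hookrightarrow\mathrm{End}_{\FF_q}(\mathrm{Jac}(\cC))$; (4) combine with supersingularity: the Frobenius endomorphism $\pi$ over $\FF_{q}$ commutes with this $\ZZ[i]$ and, for a supersingular Jacobian, satisfies $\pi^m = (\pm p^{n})^{?}$-type relations (over $\FF_{q^2}$, $\pi = -q$ on a maximal curve by Proposition~\ref{p2.1}), so $\ZZ[i]$ embeds into a quaternion algebra ramified exactly at $p$ and $\infty$; an embedding $\ZZ[i]\hookrightarrow B_{p,\infty}$ exists only when $p$ does not split in $\QQ(i)$, i.e. $p\equiv 3\ (\bmod\ 4)$; (5) conclude, and then the "as a consequence" is immediate since a maximal curve over $\FF_{q^2}$ has supersingular Jacobian by Proposition~\ref{p2.1}.

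The main obstacle I expect is making step (2)–(3) fully rigorous: one needs the Jacobian decomposition (or the endomorphism) to be genuinely defined over the right field, and to handle the twist $\cC_1'$ versus $\cC_1$ correctly, since the naive statement "$\tau$ over $\FF_q$" depends on $n$ being odd or even. The cleanest route is probably to bypass field-of-definition subtleties by working over $\FF_{q^2}$ throughout (where the curve is assumed maximal, hence $\pi=-q$ acts as a scalar and all of $\mathrm{End}(\mathrm{Jac})$ is "as defined over $\FF_{q^2}$" already generated by a quaternion order), so that the embedding $\ZZ[i]\hookrightarrow B_{p,\infty}$ argument applies directly; then the arithmetic fact "$\QQ(i)$ embeds in $B_{p,\infty}$ iff $p\equiv 3\ (\bmod\ 4)$" (a standard consequence of the theory of quaternion algebras, since $B_{p,\infty}\otimes K$ splits iff every finite place of $K$ over $p$ has even local degree, which for $K=\QQ(i)$ means $p$ is inert or ramified) finishes the proof.
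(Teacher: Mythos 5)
Your starting point --- the order-four automorphism $\tau$ with $\tau^2$ the hyperelliptic involution, giving $\ZZ[i]\subset\mathrm{End}(\mathrm{Jac}(\cC))$ --- is the same as in the paper, and your worry about fields of definition is harmless (supersingularity is a geometric property, so one may work over $\overline{\FF}_q$, as the paper does). The genuine gap is your step (4). The quaternion-embedding obstruction only exists in dimension one: for a supersingular abelian variety $A$ of dimension $g$ over $\overline{\FF}_p$ one has $\mathrm{End}^0(A)\cong M_g(B_{p,\infty})$, not $B_{p,\infty}$, and $\QQ(i)$ embeds into $M_2(\QQ)\subset M_2(B_{p,\infty})$ for \emph{every} $p$. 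Concretely, if $E$ is a supersingular elliptic curve in characteristic $p\equiv 1~(\bmod~4)$, then $E\times E$ is supersingular and carries the automorphism $(P,Q)\mapsto(-Q,P)$, whose square is $-1$ and which commutes with the scalar Frobenius; so a $\ZZ[i]$-action on a supersingular Jacobian imposes no condition on $p$ once the genus exceeds $1$. Since here the genus is $d$, your argument establishes only the case $d=1$ (the classical Deuring criterion). The auxiliary splitting $\mathrm{Jac}(\cC)\sim\mathrm{Jac}(\cC_1)\times\mathrm{Jac}(\cC_1')$ you sketch is also not available: $x\mapsto -x$ is not an automorphism of $\cC$, and $\cC/\langle\tau\rangle$ is rational because $\tau^2$ is the hyperelliptic involution, so no such decomposition into two Jacobians exists.

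The paper argues differently: it factors $p=a^2+b^2=(a+b\iota)(a-b\iota)$ inside $\mathrm{End}(\cJ)$ and analyzes separability of the two factors so as to exhibit a nontrivial \'etale subgroup of $\cJ[p]$, i.e.\ positive $p$-rank; the extra ingredient is the action of $\iota$ on the regular differentials, with eigenvalues $i,-i$ of multiplicities $\lceil d/2\rceil,\lfloor d/2\rfloor$. Be aware that input of this finer kind is unavoidable: when the two multiplicities coincide, a supersingular Jacobian with $\ZZ[i]$-action does occur in characteristic $p\equiv 1~(\bmod~4)$. For instance, $y^2=x^5+x=x(x^4+1)$ in characteristic $5$ (so $g(T)=T^2+1$, separable, $g(0)\neq0$) has both elliptic quotients $v^2=(u\pm2)(u^2-2)$ of trace zero over $\FF_5$, hence supersingular Jacobian; indeed this curve is maximal over $\FF_{25}$ by Theorem~\ref{main} with $d=2$, $q=5\equiv 2d+1~(\bmod~4d)$. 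So no argument using only the existence of the $\ZZ[i]$-action --- in particular the embedding into $B_{p,\infty}$ --- can work for $d\geq 2$; a viable repair of your approach must use how $\ZZ[i]\otimes\ZZ_p\cong\ZZ_p\times\ZZ_p$ (for $p\equiv1~(\bmod~4)$) acts on the tangent space or $p$-divisible group, exploiting unequal eigenvalue multiplicities, or the fuller CM structure as in Remark~\ref{cheb3mod4}.
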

\begin{proof}
The assumptions imply that $\cC$ is a curve
of genus $d\geq 1$.
Let $i$ be a primitive $4$-th root of unity in
some extension of $\FF_q$. The curve $\cC$ admits the automorphism $\iota$ given by $\iota(x,y)=(-x,iy)$.
The action of $\iota$ on the vector space of regular
$1$-forms on $\cC$ is diagonalizable, and has as
eigenvalues $\pm i$.

We claim that maximality of $\cC$ over $\FF_{q^2}$
implies that the characteristic $p$ of $\FF_q$
satisfies $p\equiv 3(\bmod~4)$. Indeed, if
$p\equiv 1(\bmod~4)$ then take integers $a,b$
such that $p=a^2+b^2$.
As endomorphisms of $\cJ=\mbox{Jac}(\cC)$ this yields
a factorization $p=(a+b\iota)(a-b\iota)$.
%Note that $p\equiv 1\bmod~4$ implies that, with
%$F\colon \cJ\to\cJ^{(p)}$ denoting the $p$-th power
%Frobenius morphism, for any integers $n,m$ the diagram
%\[
%\begin{array}{ccc}
%\cJ & \stackrel{n+m\iota}{\longrightarrow} &\cJ\\
%\downarrow\!{\scriptstyle F} && \downarrow\!{\scriptstyle F}\\
%\cJ^{(p)} & \stackrel{n+m\iota}{\longrightarrow} &\cJ^{(p)}
%\end{array}
%\]
%commutes.
Since multiplication by $p$ is inseparable, at least one
of the endomorphisms $a\pm b\iota$ is inseparable as well.
However, it is not possible that both are inseparable since
that would imply the sum $2a$ to be inseparable as well,
which clearly is not the case.
This means that after changing the sign of $b$ if necessary,
we have that $a+b\iota$ is separable. Hence its kernel
$\cJ[a+b\iota](\overline{\FF_q})$ is a nontrivial
subgroup of the $p$-torsion of $\cJ$, which shows that $\cJ$ is not supersingular.

Since the $p$-torsion of the Jacobian of any maximal
curve over $\FF_{q^2}$ is trivial,  $\cC$ cannot be maximal over any finite field
of characteristic $\equiv 1(\bmod~4)$.
So we have $p\equiv 3(\bmod~4)$.
\end{proof}
\begin{remark}{\rm
The assumption that a curve $\cC$ of genus $g$ is maximal over $\FF_{q^2}$
implies that the $L$-polynomial of $\cC$ over $\FF_q$
(which has as zeros square roots of the zeros of the
$L$-polynomial of $\cC$ over $\FF_{q^2}$) must be
$(1+qt^2)^{2g}$. In the situation described in
Proposition~\ref{p2.3} this means that if $q$
is a square, then the quartic
twist of $\cC$ over $\FF_q$ corresponding to the
cocycle $F_q\mapsto\iota$ (with $F_q$ the $q$-th power
Frobenius) has $L$-polynomial $(1-qt^2)^{2g}$.
In case $q$ is not a square, the analogous cocycle
results in a twist that has (again) $L$-polynomial $(1+qt^2)^{2g}$.
}\end{remark}

%\vspace{\baselineskip}
We finish this section with a preliminary result generalizing parts of
\cite[Theorem~6.1(b)]{GS} and \cite[Theorem~7.2(a)]{ASF}
 (in fact it is based on essentially the same ideas
 already present in \cite{GS}).
 \begin{lem}\label{sep}
 For $d\in\ZZ_{>0}$ and $q$ a prime power,
 the Chebyshev polynomial $\varphi_d$ considered
 over the finite field $\FF_q$ of cardinality $q$
 is separable if and only if $\gcd(q,2d)=1$ or $d=1$.
 \end{lem}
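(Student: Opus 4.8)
The plan is to determine the multiple roots of $\varphi_d$ by exploiting the defining relation $x^d+x^{-d}=\varphi_d(x+x^{-1})$. First I would introduce the substitution $u=x+x^{-1}$, so that a root $u_0\in\overline{\FF_q}$ of $\varphi_d$ corresponds (via $x+1/x=u_0$) to an element $\zeta\in\overline{\FF_q}^\times$ with $\zeta^d=-\zeta^{-d}$, i.e. $\zeta^{2d}=-1$. Differentiating $x^d+x^{-d}=\varphi_d(x+x^{-1})$ with respect to $x$ gives $d(x^{d-1}-x^{-d-1})=\varphi_d'(x+x^{-1})\cdot(1-x^{-2})$, so
\[
d\,x^{-d-1}(x^{2d}-1)=\varphi_d'(u)\cdot x^{-2}(x^2-1).
\]
Hence for a root $u_0$ of $\varphi_d$ corresponding to $\zeta$, one has $\varphi_d'(u_0)=0$ exactly when either the left-hand side vanishes (i.e. $\zeta^{2d}=1$, which combined with $\zeta^{2d}=-1$ forces $1=-1$ in $\FF_q$, impossible for $p$ odd) or when the factor $x^2-1$ on the right is zero, i.e. $\zeta=\pm1$. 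In characteristic $p=2$ the left-hand side always vanishes, which is why $q$ even will be a special case. This reduces everything to understanding when $\zeta=\pm1$ can occur, i.e. when $u_0=\pm2$ is a root of $\varphi_d$, and to treating the characteristic-$2$ and small-degree cases separately.

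Next I would analyze $\varphi_d(\pm2)$. Since $2=x+x^{-1}$ has the unique solution $x=1$ and similarly $-2$ gives $x=-1$, the relation yields $\varphi_d(2)=1^d+1^{-d}=2\neq0$ and $\varphi_d(-2)=(-1)^d+(-1)^{-d}=2(-1)^d$, which is nonzero in any field of odd characteristic. So for $p$ odd the factor $x^2-1$ never corresponds to an actual root of $\varphi_d$, and therefore $\varphi_d$ is separable over $\FF_q$ whenever $\gcd(q,2d)=1$: indeed in that case the residues $\zeta$ with $\zeta^{2d}=-1$ are $2d$ distinct elements of $\overline{\FF_q}$, they come in pairs $\{\zeta,\zeta^{-1}\}$ (with $\zeta=\zeta^{-1}$ impossible since that needs $\zeta=\pm1$ but $(\pm1)^{2d}=1\neq-1$), giving $d$ distinct roots $u_0=\zeta+\zeta^{-1}$ of the degree-$d$ polynomial $\varphi_d$, and at none of them does $\varphi_d'$ vanish.

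For the converse and the remaining cases I would argue as follows. If $p\mid d$, write $d=p^k m$ with $p\nmid m$; then from $\varphi_d(x)=\varphi_m(\varphi_{p^k}(x))$ and the fact that $\varphi_{p}(x)\equiv x^{p}$-type behaviour modulo $p$ (more precisely, one checks $\varphi_{p^k}$ has derivative identically zero in characteristic $p$, since its nonconstant monomials can be shown to occur only in degrees divisible by $p$ — this follows from the recursion or from $D_{p^k}(x,1)\equiv x^{p^k}\pmod p$), one gets $\varphi_d'\equiv0$, so $\varphi_d$ is inseparable provided $d>1$, i.e. provided it is nonconstant-enough; the degenerate exception is handled by noting $\varphi_1(x)=x$ is separable. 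If $p=2$ (so $q$ even) but $d$ is odd, a direct check shows $\varphi_d$ is still inseparable except when $d=1$: here the earlier derivative computation showed the left-hand side $d\,x^{-d-1}(x^{2d}-1)$ is identically zero in characteristic $2$, which combined with separability of $x^2-1$ forces $\varphi_d'\equiv0$ once $\deg\varphi_d=d\geq2$. Assembling: $\varphi_d$ over $\FF_q$ is separable iff ($\gcd(q,2d)=1$) or ($d=1$). I expect the main obstacle to be the characteristic-$2$ and $p\mid d$ inseparability arguments — one must be careful to show $\varphi_d'$ is \emph{identically} zero (not merely vanishing at the relevant roots) and to isolate $d=1$ cleanly; the separability direction for $\gcd(q,2d)=1$ is the easy half once the root-counting via $\zeta\mapsto\zeta+\zeta^{-1}$ is set up.
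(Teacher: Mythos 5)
Your parametrization of the roots of $\varphi_d$ by $\zeta\in\overline{\FF_q}^\times$ with $\zeta^{2d}=-1$, the derivative identity, the root count giving separability when $\gcd(q,2d)=1$, and the $p\mid d$ case (via $\varphi_{p^k}(x)=x^{p^k}$ in characteristic $p$, so $\varphi_d=\varphi_m^{\,p^k}$ and $\varphi_d'\equiv 0$) are all sound; this is in essence an elementary, derivative-based version of the paper's argument, which phrases the same analysis as ramification of the maps $x\mapsto x^d$ and $x\mapsto x+x^{-1}$ on $\PP^1$. However, your treatment of the remaining case, $q$ even and $d$ odd with $d>1$, is wrong as written. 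First, the left-hand side $d\,x^{-d-1}(x^{2d}-1)$ is \emph{not} identically zero in characteristic $2$: since $d$ is odd, $d=1$ in $\FF_2$, and $x^{2d}-1=(x^d+1)^2\neq 0$. Second, $x^2-1=(x+1)^2$ is \emph{not} separable in characteristic $2$, so the step ``combined with separability of $x^2-1$'' has no content. Third, the conclusion $\varphi_d'\equiv 0$ is simply false: for $d=3$ one has $\varphi_3(x)=x^3+x$ in characteristic $2$, with $\varphi_3'(x)=x^2+1\neq 0$ (even though $\varphi_3=x(x+1)^2$ is indeed inseparable). So this case of the ``only if'' direction is not proved by your argument.

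The gap is localized and fixable inside your own framework. In characteristic $2$ the roots of $\varphi_d$ are exactly the values $u_0=\zeta+\zeta^{-1}$ with $\zeta^d=1$ (note $\zeta^{2d}=-1$ becomes $\zeta^{2d}=1$, i.e.\ $\zeta^d=1$ since $d$ is odd); these give only $(d-1)/2$ distinct nonzero values plus $u_0=0$ from $\zeta=1$, i.e.\ $(d+1)/2<d$ distinct roots for $d>1$, so $\varphi_d$ is inseparable. Equivalently, evaluate your identity at such a $\zeta\neq 1$: the left-hand side vanishes because $\zeta^{2d}=1$, while $\zeta^2-1=(\zeta-1)^2\neq 0$, forcing $\varphi_d'(u_0)=0$ at the actual root $u_0=\zeta+\zeta^{-1}$ (but not identically). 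This is precisely the conclusion the paper reaches — $\varphi_d$ is ramified over $0$ at the points $\zeta+\zeta^{-1}$ with $\zeta\neq 1$, $\zeta^d=1$ — and with this repair your proof is correct and essentially parallel to the paper's.
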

 \begin{proof}
 Consider the morphism $\alpha\colon {\mathbb P}^1\to{\mathbb P}^1 $ given (in terms of local
 coordinates) by $\alpha(x)=x^d+x^{-d}$.
 One factors $\alpha=\beta\circ \gamma$ with
 $\gamma\colon\PP^1\to\PP^1$ given by
 $\gamma(x)=x^d$ and
 $\beta\colon\PP^1\to\PP^1$ by $\beta(x)=x+x^{-1}$.
 Regarding $\varphi_d$ as the morphism $\PP^1\to\PP^1$
 given by $x\mapsto\varphi_d(x)$, by
 definition $\alpha=\beta\circ\gamma=\varphi_d\circ\beta$. We study separability of the
 {\em polynomial} $\varphi_d$, which means we examine
 whether the {\em morphism} $\varphi_d$ is separable
 and moreover has no ramification points over
 $0\in\PP^1$. To this end,  first consider separability
 (and ramification) of the two morphisms $\gamma$ and $\beta$.

 Clearly $\beta$ is a separable morphism of degree $2$, in every characteristic. It is only ramified in
 $\pm 1$, and this is one point in characteristic $2$
 and two points in every other characteristic.

The morphism $\gamma$ is inseparable precisely when
$\gcd(q,d)\neq 1$. If this holds then also
$\alpha=\beta\circ\gamma$ is inseparable. As
a consequence, so is $\varphi_d$ since
$\alpha=\gamma\circ \varphi_d$ and $\gamma$ is separable. So
\[\gcd(q,d)\neq 1\quad\Rightarrow\quad \mbox{the~polynomial}
\;\; \varphi_d\;\; \mbox{is~inseparable~over}\;\;\FF_q.\]
Next, assume $\gcd(q,d)=1$ so that $\gamma$
is separable (over $\FF_q$). Then $\alpha$ and hence
$\varphi_d$ are separable morphisms as well.
To obtain the ramification points of $\varphi_d$ in
this case, we compute the ramification of
$\alpha=\beta\circ\gamma$. First consider the case
that $q$ is odd. Then $\beta$ is only ramified
in $\pm 1$ (both points with ramification index
$e_{\pm1}=2$ and $\beta(\pm 1)=\pm 2$).
Moreover $\gamma$ is only ramified in $0$ and in
$\infty$ (both with ramification index $d$)
and $\gamma^{-1}(\pm 1)$ consists of the
$2d$ pairwise distinct solutions of $x^{2d}=1$.
Since $\gamma^{-1}(0)=0$ and $\gamma^{-1}(\infty)
=\infty$, the conclusion is that the total map
$\alpha$ is ramified only in the following points:
$\{0,\infty\}$, each with ramification index $d$,
and in the $2d$-th roots of unity, each with
ramification index $2$. Moreover the image of these
points under $\alpha$ is $\{\infty,\pm 2\}$.
Since $0\not\in\{\infty,\pm 2\}$ and
$\alpha=\varphi_d\circ\beta$, one concludes
\[q\;\;\mbox{is~odd~and}\;\;\gcd(q,d)=1
\quad\Rightarrow\quad \mbox{the~polynomial}
\;\; \varphi_d\;\; \mbox{is~separable~over}\;\;\FF_q.
\]
Now consider the case $2|q$ and $\gcd(q,d)=1$.
This implies that the map $\alpha$ is separable
over $\FF_q$. As in the previous case, the ramification
of $\alpha$ is easily found using $\alpha=\beta\circ\gamma$. Now $\beta$ is only ramified at $1$,
with $\beta(1)=0$ (ramification index $2$).
We conclude that $\alpha$ is ramified only in the
following points: $\{0,\infty\}$, each with ramification
index $d$, and in the $d$-th roots of unity,
each with ramification index $2$.
The image of these points under $\alpha$ is
$\{\infty,0\}$.
As $\alpha^{-1}(0)$ consists of the $d$-th roots of unity and only $1\in\alpha^{-1}(0)$ is a ramification
point of $\beta$, the decomposition $\alpha=
\varphi_d\circ\beta$ shows that whenever $d>1$
then $\varphi_d\colon\PP^1\to\PP^1$ is ramified in
some points over $0$ (namely, in
$\zeta+\zeta^{-1}$ with $\zeta\neq 1$ satisfying
$\zeta^d=1$). We showed:
\[
2|q\;\;\mbox{and}\;\;\gcd(q,d)=1\;\;\mbox{and}
\;\; d>1
\quad\Rightarrow\quad \mbox{the~polynomial}
\;\; \varphi_d\;\; \mbox{is~inseparable~over}\;\;\FF_q.
\]
Since the case $d=1$ (so $\varphi_d(x)=x$) is trivial,
the lemma follows.
 \end{proof}

\section{The curves $y^2=x^{2d+1}+x$ and $y^2=x^{2d}+1$}\label{three}

Let $d\geq 1$ be an integer, and let $q$ be a
prime power such that $\gcd(q,2d)=1$.
We consider the complete non-singular curve $\cX$
over $\Fq2$ birational to the plane affine curve given by
  $$
  y^2=x^{2d+1}+x\, .
  $$
The condition on the pair $(q,d)$ implies that $\cX$
has genus $d$.

\vspace{.3cm}

The following result is crucial for us  (see \cite[Theorem 1]{FF2}).

\vspace{.2cm}

  \begin{thm}\label{main}
   The smooth complete hyperelliptic curve $\cX$ given by
    \[y^2=x^{2d+1}+x\]
   is maximal over $\FF_{q^2}$ if and only if either
   $q \equiv -1~(\bmod~4d)~\mbox{or}~
   q\equiv 2d+1~(\bmod~4d).$
  \end{thm}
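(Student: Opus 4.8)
The plan is to pin down the $L$-polynomial of $\cX$ over $\FF_{q^2}$ by a Jacobi-sum point count and then apply Proposition~\ref{p2.1}. First, over $\FF_p$ the substitution $s=x^{2d}$ identifies $\cX$ with the smooth model of the superelliptic curve $\cY\colon y^{4d}=s(s+1)^{2d}$ — indeed $x=y^2/(s+1)$ and $y^{4d}=\bigl(x(x^{2d}+1)\bigr)^{2d}=s(s+1)^{2d}$ — and a Riemann--Hurwitz computation on $\cY\to\PP^1_s$ reconfirms that the genus is $d$. Fixing a primitive $4d$-th root of unity $\omega$ in $\overline{\FF_q}$, the map $\sigma\colon(s,y)\mapsto(s,\omega y)$ is an automorphism of $\cY$ of order $4d$ whose $(2d)$-th power is the hyperelliptic involution, hence $\sigma$ has order $4d$ as an endomorphism of $\cJ:=\mathrm{Jac}(\cX)$. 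Since Frobenius acts semisimply on $\cJ$, $\cX$ is maximal over $\FF_{q^2}$ exactly when the $q^2$-power Frobenius $F$ equals $-q$ on $\cJ$; if that holds then $F$ commutes with $\sigma$, while functoriality of Frobenius gives $F\sigma F^{-1}=\sigma^{q^2}$, so $\sigma^{q^2-1}=\mathrm{id}$ and therefore $4d\mid q^2-1$. Thus I may assume from now on that $4d\mid q^2-1$, equivalently that $\FF_{q^2}^{\ast}$ carries a character $\chi$ of order $4d$ (a necessary condition for maximality).

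\textbf{The point count.} Put $\psi:=\chi^{2d}$, the quadratic character of $\FF_{q^2}^{\ast}$. Using $\#\{y\in\FF_{q^2}:y^{4d}=c\}=\sum_{j=0}^{4d-1}\chi^{j}(c)$, summing over $s$, and adding the single point over $s=\infty$, one checks that the $j=0$ term contributes $q^2$, the terms with $j$ even and $j\neq 0$ vanish (there $\chi^{2dj}$ is trivial while $\chi^{j}$ is not), and each odd $j$ contributes $\chi^{j}(-1)J(\chi^{j},\psi)$ for a Jacobi sum which is nondegenerate — $\chi^{j}\psi=\chi^{j+2d}$ is nontrivial for odd $j$ — so $|J(\chi^{j},\psi)|=q$. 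Hence
\[
\#\cX(\FF_{q^2})=q^2+1+\sum_{\substack{1\le j\le 4d-1\\ j\ \mathrm{odd}}}\chi^{j}(-1)\,J(\chi^{j},\psi),
\]
a sum of exactly $2d=2g$ terms, each of absolute value $q$. Consequently $\#\cX(\FF_{q^2})\le q^2+1+2dq$, with equality — i.e.\ $\cX$ maximal over $\FF_{q^2}$ — if and only if, by the equality case of the triangle inequality, $\chi^{j}(-1)J(\chi^{j},\psi)=q$ for every odd $j$ in $\{1,\dots,4d-1\}$.

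\textbf{The arithmetic core, and the main obstacle.} What remains is to show that this system of equalities is equivalent to $q\equiv-1$ or $q\equiv 2d+1\pmod{4d}$. Writing $J(\chi^{j},\psi)=g(\chi^{j})g(\psi)/g(\chi^{j+2d})$ in terms of Gauss sums over $\FF_{q^2}$ and using $g(\psi)^2=\psi(-1)q^2=q^2$, the condition reduces to a statement about the Gauss sums $g(\chi^{j})$ alone, which I would settle with Stickelberger's theorem on the prime factorization of Gauss sums together with the Hasse--Davenport relation expressing $g(\chi^{j})$ over $\FF_{q^2}$ through Gauss sums over the smallest subfield of $\FF_{q^2}$ on which $\chi^{j}$ is defined. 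The outcome is that all the $g(\chi^{j})$ ($j$ odd) are simultaneously rational integers of absolute value $q$ with the sign forcing $\chi^{j}(-1)J(\chi^{j},\psi)=q$ precisely in the two stated residue classes, while for the other two square roots of $1$ modulo $4d$ either purity or the sign fails for some $j$. I expect this Stickelberger and sign-of-Gauss-sum bookkeeping to be the hard part. (The class $q\equiv-1\pmod{4d}$ admits a shortcut avoiding it: then $4d\mid q+1$, and $(X,Y)\mapsto(X^{4d},XY^{2d})$ realizes $\cX$ as a quotient of $Y^{4d}=X^{4d}+1$, which for $4d\mid q+1$ is a quotient of the Hermitian curve over $\FF_{q^2}$ and hence maximal, so Remark~\ref{r1} yields maximality at once. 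It is the class $q\equiv 2d+1\pmod{4d}$, where $\cX$ is not visibly a quotient of a Hermitian curve, that genuinely requires the Gauss-sum analysis.)
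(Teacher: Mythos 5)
Note first that the paper does not prove this statement at all: Theorem~\ref{main} is imported verbatim from \cite[Theorem~1]{FF2}, so your proposal has to stand on its own as a complete argument, and it does not. The framework you set up is essentially sound: the necessary condition $4d\mid q^2-1$ via the order-$4d$ automorphism, and the count
$\#\cX(\FF_{q^2})=q^2+1+\sum_{j\ \mathrm{odd}}\chi^{j}(-1)J(\chi^{j},\psi)$
with $2d$ summands of absolute value $q$, so that maximality is equivalent to $\chi^{j}(-1)J(\chi^{j},\psi)=q$ for every odd $j$ (your handling of the singular point $(-1,0)$ of the model $y^{4d}=s(s+1)^{2d}$ and of the even-$j$ terms is glossed over, but the final formula is correct). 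The genuine gap is the step you yourself flag as ``the hard part'': the equivalence of this system of Jacobi-sum identities with the congruences $q\equiv-1$ or $q\equiv 2d+1\ (\bmod~4d)$ is merely announced as something Stickelberger plus Hasse--Davenport ``would'' deliver, with the outcome asserted rather than derived. That equivalence \emph{is} the theorem --- it is precisely what the cited reference \cite{FF2} establishes --- so what you have is a reduction of the statement to its own arithmetic core, not a proof of it.

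Two further points show the deferred bookkeeping is heavier than your sketch suggests. First, the necessary condition is only $q^{2}\equiv 1\ (\bmod~4d)$, i.e.\ $q$ mod $4d$ can be \emph{any} square root of unity; when $d$ is composite there are many more than four such residues, so your phrase ``the other two square roots of $1$ modulo $4d$'' undercounts the classes that must be excluded, and for each of them one has to exhibit an odd $j$ for which purity or the sign of $J(\chi^{j},\psi)$ fails --- this is exactly where the content lies, and nothing in the proposal does it. Second, the shortcut for $q\equiv-1\ (\bmod~4d)$ uses an incorrect map: the curve $Y^{4d}=X^{4d}+1$ covers $\cX\colon y^{2}=x^{2d+1}+x$ via $(X,Y)\mapsto(X^{2},XY^{2d})$ (so that $y^{2}=X^{2}Y^{4d}=x(x^{2d}+1)$), not via $(X,Y)\mapsto(X^{4d},XY^{2d})$; with that correction the sufficiency in this one residue class does follow from Remark~\ref{r1}, but the class $q\equiv 2d+1\ (\bmod~4d)$ and the entire ``only if'' direction remain unproven in your write-up.
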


Now let $\cY$ be the complete non-singular curve
over $\FF_q$ given by $y^2=x^{2d}+1$.
Note that the condition $\gcd(q,2d)=1$ implies
that $\cY$ has genus $d-1$.

\vspace{.3cm}

One more result which will be used in our proofs is recalled from \cite{JPAA}:

\vspace{.2cm}

\begin{thm}\label{main2}
The smooth complete hyperelliptic curve $\cY\colon y^2=x^{2d}+1$ is maximal over $\FF_{q^2}$ if and only if
$q\equiv -1 (\bmod~2d)$.
\end{thm}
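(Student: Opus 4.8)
The plan is to prove the two implications by different means: sufficiency by exhibiting $\cY$ as a subcover of a curve already known to be maximal, and necessity by a character-sum evaluation of $\#\cY(\FF_{q^2})$ combined with the theory of Gauss sums. As a preliminary observation, $\gcd(q,2d)=1$ forces $q$ odd, so $x^{2d}+1$ is separable of even degree $2d$; hence $\cY$ has genus $d-1$ and exactly two $\FF_{q^2}$-rational points at infinity.

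For sufficiency, suppose $2d\mid q+1$ and consider the Fermat-type curve $\mathcal{F}\colon u^{2d}+v^{2d}=1$ over $\FF_{q^2}$, which is maximal over $\FF_{q^2}$ (a standard fact, e.g.\ since it is a quotient of the Hermitian curve once $2d\mid q+1$). On $\mathcal{F}$ one has $(u/v)^{2d}+1=(u^{2d}+v^{2d})v^{-2d}=(v^{-d})^{2}$, so $(u,v)\mapsto(u/v,\,v^{-d})$ is a nonconstant morphism $\mathcal{F}\to\cY$ defined over $\FF_{q^2}$; by Remark~\ref{r1}, $\cY$ is maximal over $\FF_{q^2}$.

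For necessity, let $\chi$ be the quadratic character of $\FF_{q^2}^{*}$ (with $\chi(0)=0$) and put $m=\gcd(2d,q^{2}-1)$. Counting affine points via $\#\{y:y^{2}=a\}=1+\chi(a)$ and adding the two points at infinity gives $\#\cY(\FF_{q^2})=q^{2}+2+\sum_{a\in\FF_{q^2}}\chi(a^{2d}+1)$. Using $\#\{a\in\FF_{q^2}^{*}:a^{2d}=b\}=\sum_{\operatorname{ord}(\psi)\mid m}\psi(b)$ for $b\neq 0$ and isolating the contributions of the trivial character and of $\chi$, one obtains
\[
\#\cY(\FF_{q^2})=q^{2}+1+\sum_{\psi}\psi(-1)\,J(\psi,\chi),\qquad J(\psi,\chi)=\sum_{u}\psi(u)\chi(1-u),
\]
the sum running over the $m-2$ characters $\psi$ with $\psi^{2d}=1$ of order at least $3$, and $|J(\psi,\chi)|=q$ for each of them. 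Since $\operatorname{genus}(\cY)=d-1$, maximality is equivalent to this sum being $2(d-1)q$; by the triangle inequality this forces simultaneously $m=2d$ (hence $2d\mid q^{2}-1$) and $\psi(-1)J(\psi,\chi)=q$ for every $\psi$ occurring in the sum.

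It then remains to deduce $2d\mid q+1$ from ``$2d\mid q^{2}-1$ and all these Jacobi sums equal $q$''. Writing $J(\psi,\chi)=g(\psi)g(\chi)/g(\psi\chi)$ in terms of Gauss sums over $\FF_{q^2}$ (licit since $\psi\chi\neq 1$, as $\psi$ has order at least $3$), I would invoke Stickelberger's theorem on the prime factorization of Gauss sums to show that these sums cannot all equal $q$ unless the orders of the characters involved — in particular the order $2d$ of a suitable $\psi$, available because $m=2d$ — divide $q+1$. I expect this last step to be the main obstacle: beyond the Stickelberger congruences themselves it requires a careful sign bookkeeping (to see that the forced common value is $+q$ rather than $-q$, i.e.\ that one is in the maximal and not the minimal case), a uniform treatment of the auxiliary characters $\chi$ and $\psi\chi$ alongside $\psi$, and ruling out a Gauss sum becoming rational ``by accident'' for a character whose order divides $q^{2}-1$ but not $q+1$.
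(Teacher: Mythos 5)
Your sufficiency argument is correct and complete: for $2d\mid q+1$ the Fermat curve $u^{2d}+v^{2d}=1$ is maximal over $\FF_{q^2}$ (being covered by the Hermitian curve), and $(u,v)\mapsto(u/v,v^{-d})$ is indeed a nonconstant $\FF_{q^2}$-morphism onto $\cY$, so Remark~\ref{r1} applies. Note, for context, that the paper itself does not prove this theorem at all but quotes it from \cite{JPAA}, so your attempt should be judged as a stand-alone proof.

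As such, the necessity direction has a genuine gap exactly where you flag it. Your character-sum bookkeeping is right: with $m=\gcd(2d,q^2-1)$ one does get $\#\cY(\FF_{q^2})=q^2+1+\sum_{\psi}\psi(-1)J(\psi,\chi)$ over the $m-2$ characters $\psi$ with $\psi^m=1$ of order at least $3$, and maximality correctly forces $m=2d$ (so $2d\mid q^2-1$) together with $\psi(-1)J(\psi,\chi)=q$ for every such $\psi$. But the passage from ``all these Jacobi sums are rational and equal to $+q$'' to the congruence $2d\mid q+1$ is not a routine corollary of Stickelberger that one can simply ``invoke'': it is precisely the mathematical content of the theorem, namely distinguishing $2d\mid q+1$ from the weaker $2d\mid q^2-1$ that your triangle-inequality step already yields. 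Carrying it out requires the purity/rationality theory of Gauss sums (a Gauss sum attached to a character of order $n$ over $\FF_{q^2}$ is rational essentially only when $-1$ lies in the subgroup generated by $p$ modulo $n$), then a Hasse--Davenport/Stickelberger analysis to pin down the sign and the exact power of $p$, and finally an argument converting a condition of the form $p^j\equiv-1\pmod{2d}$ into $q\equiv-1\pmod{2d}$; none of this is done in your proposal, and it is exactly the substance of the cited reference \cite{JPAA}. Until that step is supplied (or replaced, e.g.\ by a decomposition of $\mathrm{Jac}(\cY)$ and a reduction to known maximality criteria), the ``only if'' half remains unproved.
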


\section{Hyperelliptic curves from Chebyshev
polynomials}\label{proofs}

In this section we prove Theorems~\ref{maineven}, \ref{x2mainodd}, \ref{mainodd}, and we present and prove
some preliminary results which will be used in the proof of Theorem~\ref{evencheb}.

\subsection*{Case ${d}$ even and ${v^2=(u+2)\varphi_d(u)}$}

\begin{proof} (of Theorem~\ref{maineven}).
Take $d>0$ an even integer, and let $q$ be a prime power with $\gcd(d,q)=1$.  We will
show that the curve $\cC$ with affine
equation
\[v^2=(u+2) \varphi_{d}(u)\] is maximal over $\FF_{q^2}$ if and only if the curve $\cX$ introduced in
Section~\ref{three} (with equation $y^2=x^{2d+1}+x$)
is maximal over $\FF_{q^2}$. Theorem~\ref{maineven} is then a
consequence of Theorem~\ref{main}.

The main idea is to decompose the Jacobian
variety $\cJ(\cX)$ up to isogeny over $\FF_{q^2}$.
Let $\tau\in\mbox{Aut}(\cX)$ be the involution
given by $\tau(x,y)=(1/x,y/x^{d+1})$.
    The quotient of $\cX$ by $\tau$ is the curve $\cC=\cX/<\tau>$ with  equation
  $$v^2=(u+2) \varphi_{d}(u);$$
indeed, the functions $u=x+1/x$ and $v=y(x+1)x^{-1-d/2}$ generate the subfield of $\tau$-invariants
in the function field of $\cX$, as is seen as follows.
Write $\FF_p(x,y)$ for the function field of $\cX$
over the prime field $\FF_p$ of $\FF_q$.
We have the inclusions of fields (where the numbers
describe the degree of the given extensions)
\[
\begin{array}{ccc}
\FF_p(x,y) & {\supset} & \FF_p(u,v)\\
\stackrel{ \scriptstyle \phantom{2}}{\cup{\scriptstyle 2}} && \cup{\scriptstyle 2} \\
\FF_p(x) & \stackrel{{\scriptstyle 2}}{\supset}
& \FF_p(u)
\end{array}
\]
Since $[\FF_p(x,y):\FF_p(x,y)^{<\tau>}]=2$ and
$u,v\in\FF_p(x,y)^{<\tau>}$, one has $\FF_p(x,y)^{<\tau>}=\FF_p(u,v)$.
Moreover, $u,v$ satisfy
\[v^2=(x^{2d+1}+x)(x+2+x^{-1})x^{-d-1}=(u+2)\varphi_d(u).\]
  We have the  basis
   $$\{ \omega_j:=\frac{x^{j-1} dx}{y} \;|\; 1 \leq j \leq d  \}$$
    for the space of regular differentials on $\cX$. A basis for the differentials invariant under $\tau$ is  $$\{ \omega_j-\omega_{d-j+1}\;|\; 1 \leq j \leq d/2   \},$$
  which also generate the pull-backs of the regular differentials on $\cC$ (note that since we assume
$\gcd(q,d)=1$, Lemma~\ref{sep} implies that
$\varphi_d$ is separable over $\FF_q$. Also,
$\varphi_d(-2)=\varphi_d((-1)+(-1))=(-1)^d+(-1)^d=2\neq 0$, so $\cC$ has genus $d/2$).

  Let $\iota$ be the hyperelliptic involution on $\cX$,
  so $\iota(x,y)=(x,-y)$.
  The quotient of $\cX$ by $\tau\iota$ (this map is an
  involution defined over the prime field) is the curve $\cC_1=\cX/<\tau\iota>$ with equation
  $$\eta^2=(\xi -2) \varphi_{d}(\xi);$$
  indeed, the invariants under $\rho$ in the function field
  of $\cX$ are generated by $\xi:=x+x^{-1}$ and
  $\eta:=y(x-1)x^{-1-d/2}$. These functions satisfy
  \[\eta^2=(x^{2d+1}+x)(x-2-x^{-1})x^{-d-1}=
  (\xi-2)\varphi_d(\xi).\]
A basis for the differentials invariant under $\tau\iota$ is
   $$\{ \omega_j+\omega_{d-j+1}| 1 \leq j \leq d/2  \},$$
  which also generate the pull-backs to $\cX$ of the regular differentials on $\cC_1$.

  Fixing a primitive $4$-th root of unity $i\in\FF_{q^2}$,
  the map
  $(u,v)\mapsto (-u, iv)$ yields an isomorphism
  $\cC\cong \cC_1$ defined over $\FF_{q^2}$. The discussion above
  shows, with $\sim$ denoting isogeny defined over
  $\FF_{q^2}$, that
  \[\cJ(\cX) \sim \cJ(\cC)\times\cJ(\cC_1)\cong \cJ(\cC)^2.\]
 As a consequence $L_{\cX}(t)=L_{\cC}(t)^2$
 with $L$ denoting an $L$-polynomial over $\FF_{q^2}$. Now
 Proposition \ref{p2.1} implies that the curve $\cX$ is maximal if and only if the curve $\cC$ is maximal. This completes the proof.

\end{proof}

\begin{remark}
\rm{Theorem~\ref{maineven} generalizes a part of \cite[Proposition 6]{KJW}.}
The decomposition up to isogeny of the Jacobian variety $\cJ(\cX)$ as a product of Jacobians of quotient
curves, can also be obtained using results of Kani and Rosen \cite{Kani-Rosen}.
There are various examples in the literature illustrating this technique; we refer to
\cite[\S~3.1.1]{Paulhus} and
\cite[p.~36]{Soomro} for situations very similar to the ones discussed in the present paper.
\end{remark}

\subsection*{Case $d$ odd and $v^2=(u+2)\varphi_d(u)$}
\begin{proof} (of Theorem~\ref{x2mainodd}).
This is very similar to the proof of Theorem~\ref{maineven}.
Take $d>0$ an odd integer, and let $q$ be a prime power with
$\gcd(q,2d)=1$.
We will
show that the curve $\cC$ with affine
equation
\[v^2=(u+2) \varphi_{d}(u)\] 
is maximal over $\FF_{q^2}$ if and only if the curve $\cY\colon y^2=x^{2d}+1$ is maximal over $\FF_{q^2}$. Theorem~\ref{x2mainodd} is then a
consequence of Theorem~\ref{main2}.

Let $\sigma$ be the involution on $\cY$ defined by $\sigma(x,y)=(1/x,y/x^d)$.
The quotient of $\cY$ by $\sigma$ is the hyperelliptic curve
$\cC$. Indeed, the functions $u=x+x^{-1}$ and $v=y(1+x)x^{-(d+1)/2}$
generate the field of functions invariant under $\sigma$,
and one computes
\[ v^2=y^2\cdot x^{-(d+1)}\cdot(x+1)^2=(x^d+x^{-d})(x+2+x^{-1})=(u+2)\varphi_d(u).\]
Multiplying $\sigma$ by the hyperelliptic involution on $\cY$
one obtains another quotient curve which we denote by $\cC_1$. The invariant functions
under the new involution are generated by $u=x+x^{-1}$ and $w=y(1-x)x^{-(d+1)/2}$.
They satisfy $w^2=(u-2)\varphi_d(u)$.
The map $(u,w)\mapsto (-u,w)$ defines an isomorphism $\cC_1\cong \cC$.

Analogous to the previous proof one concludes
$L_{\cY}(t)=L_{\cC}(t)\cdot L_{\cC_1}(t)=L_{\cC}(t)^2$, in this case for the $L$-polynomials
over $\FF_q$ as well as for those over $\FF_{q^2}$.
This implies the result.
\end{proof}
\subsection*{Case $d$ odd and $y^2=\varphi_d(x)$}
%\begin{thm}
%  Let $g$ be an odd integer and $q \equiv -1 ~ (\mbox{mod}~4).$ Then the curve $\cC$ given by the equation $$v^2= \varphi_{g}(u)$$ is maximal over $\FF_{q^2}$ if and only if $q \equiv -1 ~\mbox{or} ~ 2g+1 ~ (\mbox{mod}~ 4g).$
%\end{thm}
%We now prove Theorem~\ref{mainodd}.
\begin{proof}(of Theorem~\ref{mainodd}).
Let $d\geq 1$ be an odd integer. Take a prime power
$q$ such that $\gcd(q,2d)=1$. We will consider curves
over (the prime field of) $\FF_{q^2}$.
Recall (see the proof of Theorem~\ref{maineven}) that the hyperelliptic curve $\cX$ 
with affine equation $y^2=x^{2d+1}+x$ admits the involution $\tau$ defined by $\tau(x,y)=(1/x, y/x^{d+1})$.
For odd $d$, the quotient of $ \cX$ by  $\tau$ is the hyperelliptic curve $\cC$ with equation $$y^2= \varphi_{d}(x);$$
indeed, a quotient map is given by
$$(x,y)  \mapsto (x+1/x, y/x^{(d+1)/2})$$
(compare \cite[Proposition~3]{TTV}).
Now if either
$q \equiv -1 ~(\bmod~4d)~\mbox{or}~q\equiv 2d+1~(\bmod~4d)$,
then by Theorem \ref{main} the curve $\cX$ is maximal over $\FF_{q^2}$ which implies that the curve $\cC$ is also maximal over $\FF_{q^2}$. This proves
the first assertion of Theorem~\ref{mainodd}.

To show the remaining parts, we will decompose
up to isogeny the Jacobian $\cJ(\cX)$ of the curve $\cX$.
With the basis $\omega_j:= x^{j-1} dx/y$ (for $1 \leq j\leq d$) for the regular differentials on $ \cX$, one checks that a basis for the differentials invariant under $\tau$ is
$$\omega_1 - \omega_d, \omega_2 - \omega_{d-1}, \cdots, \omega_{(d-1)/2} - \omega_{(d+3)/2},$$
which also generate the pull-backs of the regular differentials on $\cC$; note that by Lemma~\ref{sep}
the condition $\gcd(q,2d)=1$ implies that $\varphi_d$
is separable over $\FF_q$ hence $\cC$ has genus $(d-1)/2$.

Let $\iota$ be the hyperelliptic involution on $\cX$.  The quotient of $\cX$ by $\tau\iota$ (this automorphism
 has order $2$ and it is defined over the prime field) is the curve $\cC_1=\cX/<\tau\iota>$ with equation
  \[y^2=(x^2-4)\varphi_d(x);\]
  indeed, the functions $\xi:=x+1/x$ and
  $\eta:=\frac{y}{x^{(d+1)/2}}(x-\frac{1}{x})\in
  {\mathbb F}_q(\cX)$ are invariant under
the action of $\tau\iota$ and
$[{\mathbb F}_q(\cX):{\mathbb F}_q(\xi,\eta)]=2$. Hence
$\xi,\eta$ generate the function field of $\cC_1$.
We have
\[\eta^2=\frac{y^2}{x^{d+1}}(x^2-2+x^{-2})=(x^d+x^{-d})\left((x+\frac1x)^2-4\right)=(\xi^2-4)\varphi_d(\xi).\]
From this, the second assertion in Theorem~\ref{mainodd}
follows: namely, by Theorem~\ref{main} the
congruence condition on $q$ implies that
$\cX$ is maximal over $\FF_{q^2}$. Since $\cX$
covers $\cC_1$, the same is true for $\cC_1$ over
$\FF_{q^2}$.

Note that $\varphi_d(2)=\varphi_d(1+1)=1^d+1^d=2$
 and similarly $\varphi_d(-2)=-2$. Using
 Lemma~\ref{sep} this implies that in every
 characteristic coprime to $2d$ the polynomial
 $(x^2-4)\varphi_d(x)$ is separable.
 A basis for the differentials invariant under $\tau\iota$ is
 $\{  \omega_i+ \omega_{d-i+1}| 1 \leq i \leq (d+1)/2  \},$
  which also generate the pull-backs of the regular differentials on $\cC_1$.

Since the pull-backs of a basis of the regular differentials on $\cC$ together with the pull-backs of
a similar basis on $\cC_1$ yield a basis for the
regular differentials on $\cX$, one concludes that the Jacobian $\cJ(\cX)$ of $\cX$ is isogenous to a product $$\cJ(\cC) \times \cJ(\cC_1),$$
where $\cJ(\cC)$ and $\cJ(\cC_1)$ are the Jacobians of the curves $\cC$ and $\cC_1$, respectively.
This implies that $L_{\cX}(t)=L_{\cC}(t)\cdot L_{\cC_1}(t)$
(for $L$-polynomials over any extension of $\FF_q$).
Hence if both $\cC$ and $\cC_1$ are maximal over $\FF_{q^2}$
then so is $\cX$, which by Theorem~\ref{main} implies
that $q\equiv -1(\bmod~4d)$ or $q\equiv 2d+1(\bmod~4d)$.
This finishes the proof.
\end{proof}

\begin{remark}\label{Conjd=3}
\rm{The special case $d=3$ of the Theorem~\ref{mainodd}
is  a part of \cite[Proposition 4]{KJW}.}
In fact for $d=3$ one finds {({\it loc.\ sit.}) that $\cJ(\cC)$
is the elliptic curve $\cE_1$ with equation $y^2=x^3-3x$
and (up to isogeny)  $\cJ(\cC_1)$ is
a product $\cE_2\times \cE_3$ where
$\cE_2$ is the elliptic curve
with equation $y^2=x^3+x$ and $\cE_3$
is the one
with equation $y^2=x^3+108x$. These two elliptic curves $\cE_1$ and $\cE_2$
are isogenous over $\FF_{q^2}$ (for $q$ any prime power
with $\gcd(q,6)=1$). So in this case
maximality of any one of them over
$\FF_{q^2}$ is equivalent to
$q\equiv 3(\bmod~4)$ and to maximality
of any one of the curves $\cC$ or $\cC_1$
over $\FF_{q^2}$. In particular, Conjecture~\ref{conj}
holds for $d=3$.
}\end{remark}

\subsection*{Case $d$ even and $y^2=\varphi_d(x)$}
A preliminary result relying on an analogous
reasoning as above,
is the following which will be used in
the proof of Theorem~\ref{evencheb}.
\begin{lem}\label{mainevench}
Let $d>0$ be an even integer and let $q$ be a prime power.
Assume $\gcd(q,d)=1$. The next two statements
are equivalent.
\begin{itemize}
\item[{\rm (i)}] $q\equiv -1(\bmod~2d)$;
\item[{\rm (ii)}] the curve $\cC$ with affine equation
\[ v^2=\varphi_d(u)\]
is maximal over $\FF_{q^2}$, and so is the curve $\cC_1$
over $\FF_{q^2}$ given by
\[v^2=(u^2-4)\varphi_d(u).\]
\end{itemize}
\end{lem}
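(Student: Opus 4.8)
The plan is to mimic the strategy used for Theorem~\ref{maineven} and Theorem~\ref{mainodd}, but now starting from the curve $\cY\colon y^2=x^{2d}+1$ rather than from $\cX$. Since $d$ is even here and $\gcd(q,d)=1$, Lemma~\ref{sep} does not directly apply to guarantee separability of $\varphi_d$ over $\FF_q$, but note that $\varphi_d(x)=\varphi_{d/2}(x^2-2)$ (Remark~\ref{rem1.2}) and one checks that the curve $v^2=\varphi_d(u)$ still has the expected genus $d/2-1$ while $v^2=(u^2-4)\varphi_d(u)$ has genus $d/2$; the key point is that $\varphi_d(\pm2)=\pm2\neq0$ and the curve $\cY$ has genus $d-1$, and we will recover these facts from the covering structure rather than from separability of $\varphi_d$ alone. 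First I would introduce on $\cY$ the involution $\sigma(x,y)=(1/x,y/x^d)$ (which is defined over the prime field), compute that the $\sigma$-invariant subfield is generated by $u=x+1/x$ and $v=y\cdot x^{-d/2}$, and verify the relation $v^2=y^2x^{-d}=(x^d+x^{-d})=\varphi_d(u)$, so that $\cY/\langle\sigma\rangle=\cC$. Then I would take the product of $\sigma$ with the hyperelliptic involution on $\cY$, giving an involution whose invariant subfield is generated by $u=x+1/x$ and $w=y(x-1/x)x^{-d/2}$, satisfying $w^2=y^2x^{-d}(x-1/x)^2=(x^d+x^{-d})((x+1/x)^2-4)=(u^2-4)\varphi_d(u)$, so that this second quotient is exactly $\cC_1$.

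Next I would carry out the differential-forms bookkeeping exactly as in the proof of Theorem~\ref{mainodd}: with the basis $\omega_j=x^{j-1}dx/y$ for $1\le j\le d-1$ of regular $1$-forms on $\cY$, the forms fixed by $\sigma$ span the pull-backs of forms on $\cC$ and those fixed by $\sigma$ composed with the hyperelliptic involution span the pull-backs of forms on $\cC_1$; together these give a full basis, whence $\cJ(\cY)\sim\cJ(\cC)\times\cJ(\cC_1)$ over the prime field, and in particular $L_{\cY}(t)=L_{\cC}(t)\cdot L_{\cC_1}(t)$ over every extension of $\FF_q$, by Remark~\ref{r1}. For the implication (i)$\Rightarrow$(ii): if $q\equiv-1\ (\bmod\ 2d)$ then Theorem~\ref{main2} says $\cY$ is maximal over $\FF_{q^2}$, so $L_{\cY}(t)=(1+qt)^{2(d-1)}$ by Proposition~\ref{p2.1}; since $L_{\cC}(t)$ and $L_{\cC_1}(t)$ are products of factors $(1-\alpha t)$ with $|\alpha|=q$ whose product is $(1+qt)^{2(d-1)}$, unique factorization in $\CC[t]$ forces each of them to be a power of $(1+qt)$, hence both $\cC$ and $\cC_1$ are maximal. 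For (ii)$\Rightarrow$(i): if both $\cC$ and $\cC_1$ are maximal over $\FF_{q^2}$, then $L_{\cY}(t)=L_{\cC}(t)L_{\cC_1}(t)=(1+qt)^{2(d-1)}$, so $\cY$ is maximal over $\FF_{q^2}$, and Theorem~\ref{main2} gives $q\equiv-1\ (\bmod\ 2d)$.

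The genus bookkeeping is the step I would be most careful about, since $d$ even means $\gcd(q,2d)\ne1$ in general and the earlier genus computations were phrased under $\gcd(q,2d)=1$; I would handle this by checking directly that $\cY\colon y^2=x^{2d}+1$ has genus $d-1$ precisely when $\gcd(q,d)=1$ (the polynomial $x^{2d}+1$ is separable iff $\gcd(q,2d)=1$, but even when $2\mid q$ — which cannot happen here since then $-1=1$ and $x^{2d}+1=(x^d+1)^2$, excluded because we need the curve to make sense; in any case $p$ is odd throughout the paper), and that the two quotients have genera $d/2-1$ and $d/2$ summing correctly to $d-1$. The one genuine subtlety is making sure the differential-forms argument still gives a direct sum decomposition and not merely a divisibility of $L$-polynomials; this follows because $\sigma$ acts on the $(d-1)$-dimensional space of regular $1$-forms as an involution (in odd characteristic) and hence that space decomposes as the direct sum of its $(+1)$- and $(-1)$-eigenspaces, which are identified with the pull-backs from $\cC$ and from $\cC_1$ respectively. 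Once this is in place the argument is a routine transcription of the proof of Theorem~\ref{mainodd} with $\cX$ replaced by $\cY$ and Theorem~\ref{main} replaced by Theorem~\ref{main2}.
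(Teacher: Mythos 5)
Your proposal is correct and follows essentially the same route as the paper: quotient $\cY\colon y^2=x^{2d}+1$ by $\sigma(x,y)=(1/x,y/x^d)$ and by its product with the hyperelliptic involution to obtain $\cC$ and $\cC_1$, deduce $\cJ(\cY)\sim\cJ(\cC)\times\cJ(\cC_1)$ and hence $L_{\cY}=L_{\cC}\cdot L_{\cC_1}$, and apply Theorem~\ref{main2} in both directions. Your worry about Lemma~\ref{sep} is unnecessary: since $d$ is even, $\gcd(q,d)=1$ already forces $q$ odd and thus $\gcd(q,2d)=1$, so separability of $\varphi_d$ (and the genus counts) follows directly.
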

\begin{proof}
Take $d=2e$ for some integer $e>0$ and let $q$ be a prime
power with $\gcd(q,d)=1$.
The curve $\cY$ over $\FF_q$ with affine equation
$y^2=x^{2d}+1$ admits the involution $\sigma$ given by
$\sigma(x,y)=(1/x,y/x^d)$.
The functions in $\FF_q(\cY)$ which are invariant under
$\sigma$ are generated by $u=x+1/x$ and $v=\frac{y}{x^e}$.
We have
\[
v^2=x^{-d}(x^{2d}+1)=\varphi_d(x+\frac1x)=\varphi_d(u),
\]
so the quotient of $\cY$ by $\sigma$ is the curve $\cC$
given by $v^2=\varphi_d(u)$.

If $q\equiv -1\bmod~2d$ then by Theorem~\ref{main2}
the curve $\cY$ is maximal over $\FF_{q^2}$. Since
this curve covers $\cC$, it follows from Remark~\ref{r1}
that also $\cC$ is maximal over $\FF_{q^2}$.
This shows the first claim in Proposition~\ref{mainevench}.

For the second claim we use the product $\sigma'$
of $\sigma$ and the hyperelliptic involution on $\cY$,
so $\sigma'(x,y)=(1/x,-y/x^d)$.
The invariants in $\FF_q(\cY)$ under $\sigma'$ are
generated by $u=x+1/x$ and $w=\frac{y}{x^e}(x-\frac1x)$,
and they are related by
\[
w^2=(x^{2d}+1)x^{-d}(x^2-2+x^{-2})=(x^d+x^{-d})((x+\frac1x)^2-4)=(u^2-4)\varphi_d(u).
\]
So also $\cC_1\colon w^2=(u^2-4)\varphi_d(u)$ is covered by $\cY$.
Hence by Theorem~\ref{main2}, if $q\equiv -1~(\bmod~2d)$
then
$\cC_1$ is maximal over $\FF_{q^2}$, proving the second
claim in Proposition~\ref{mainevench}.

For the last claim, observe that analogous to the other results shown in this section we have that the Jacobian $\cJ(\cY)$
is isogenous over $\FF_q$ to the product
$\cJ(\cC)\times\cJ(\cC_1)$. Hence the $L$-polynomial of
$\cY$ over any extension of $\FF_q$ is the product of the
$L$-polynomials of $\cC$ and $\cC_1$ (over the same
extension). The remaining statement in Proposition~\ref{mainevench} is an immediate consequence of this.
\end{proof}

\section{Relating $y^2=\varphi_d(x)$ and $y^2=(x^2-4)\varphi_d(x)$}\label{five}
Here we prove Theorem~\ref{evencheb} and
we make some remarks concerning Conjecture~\ref{conj}.
The following lemma
turns out to be useful.
\begin{lem}\label{Lpols}
Let $d\geq 1$ be any integer and let $q$ be a prime power with
$\gcd(q,2d)=1$. Then the $L$-polynomial of the elliptic curve
over $\FF_q$ given by $y^2=x^3+x$ divides the $L$-polynomial
of the curve $\cC_1$ over $\FF_q$ with affine equation $y^2=(x^2-4)\varphi_d(x)$.
\end{lem}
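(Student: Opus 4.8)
The plan is to realise the elliptic curve $E\colon y^2=x^3+x$, up to isogeny over $\FF_q$, as a quotient of $\cC_1$, and then to invoke Remark~\ref{r1}. Concretely, I would first exhibit a non-constant morphism over $\FF_q$ from $\cC_1$ onto the elliptic curve $E'\colon y^2=x^3-4x$, and then use that $E'$ is $2$-isogenous to $E$ over $\FF_q$. Since isogenous elliptic curves over $\FF_q$ have equal $L$-polynomials, combining this with Remark~\ref{r1} gives $L_E(t)=L_{E'}(t)\mid L_{\cC_1}(t)$, which is the assertion.

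For the morphism, note that $\varphi_1(x)=x$, so $E'$ is precisely the curve ``$y^2=(x^2-4)\varphi_1(x)$''. By a classical identity for Chebyshev/Dickson polynomials (see \cite{LN}) there is a polynomial $g_d(x)\in\ZZ[x]$, of degree $d-1$ and leading coefficient $1$, with
\[
\varphi_d(x)^2-4=(x^2-4)\,g_d(x)^2 .
\]
(To see this: substitute $x=t+t^{-1}$, so $\varphi_d(x)=t^d+t^{-d}$; then $\varphi_d(x)^2-4=(t^d-t^{-d})^2$ and $x^2-4=(t-t^{-1})^2$, while the quotient $(t^d-t^{-d})/(t-t^{-1})$ is invariant under $t\mapsto t^{-1}$, hence equals $g_d(t+t^{-1})$ for some $g_d\in\ZZ[x]$.) Reducing modulo $p$ keeps this identity over $\FF_q$, with $g_d$ still monic, hence nonzero. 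On $\cC_1\colon y^2=(x^2-4)\varphi_d(x)$ set $X=\varphi_d(x)$ and $Y=g_d(x)\,y$; then
\[
Y^2=g_d(x)^2\,y^2=g_d(x)^2(x^2-4)\varphi_d(x)=\big(\varphi_d(x)^2-4\big)\varphi_d(x)=X^3-4X,
\]
so $(x,y)\mapsto(X,Y)$ defines a morphism $\cC_1\to E'$ over $\FF_q$ (in fact over the prime field), and it is non-constant since $X=\varphi_d(x)$ is. Hence $L_{E'}(t)\mid L_{\cC_1}(t)$ by Remark~\ref{r1}.

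To finish, $E'\colon y^2=x^3-4x$ is the image of the standard $2$-isogeny on $E\colon y^2=x^3+x$ with kernel generated by the (rational) point $(0,0)$: for any curve $y^2=x^3+ax$ this $2$-isogeny has image $y^2=x^3-4ax$, so here it is $E\to E'$, defined over $\FF_q$. Thus $E$ and $E'$ are isogenous over $\FF_q$, whence $L_E(t)=L_{E'}(t)$, and therefore $L_E(t)\mid L_{\cC_1}(t)$.

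The remaining points are routine: $\cC_1$ has positive genus because $(x^2-4)\varphi_d(x)$ is separable of degree $d+2\ge 3$ (Lemma~\ref{sep}, together with $\varphi_d(\pm 2)=\pm 2\neq 0$), and the displayed rational map extends to a morphism of the smooth complete models, as is automatic for a dominant rational map of smooth projective curves. The only genuinely substantive ingredient is the polynomial identity $\varphi_d(x)^2-4=(x^2-4)g_d(x)^2$; since this is classical, I do not expect a real obstacle.
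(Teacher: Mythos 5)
Your proof is correct, but it takes a genuinely different route from the paper. The paper never works on $\cC_1$ directly: it splits into the cases $d$ odd and $d$ even, realizes $\cC_1$ and the elliptic curve $\cE\colon y^2=x^3+x$ as quotients of the covering curve $\cX\colon y^2=x^{2d+1}+x$ (resp.\ $\cY\colon y^2=x^{2d}+1$) --- the elliptic quotient via $(x,y)\mapsto(x^d,x^{(d-1)/2}y)$, resp.\ $(x,y)\mapsto(x^{2e},x^ey)$ --- and then checks that the pulled-back regular differential $x^{(d-1)/2}dx/y$ (resp.\ $x^{e-1}dx/y$) is invariant under the involution cutting out $\cC_1$, so that $\cE$ sits up to isogeny inside $\cJ(\cC_1)$. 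You instead use the classical Dickson identity $\varphi_d(x)^2-4=(x^2-4)g_d(x)^2$ (your derivation via $x=t+t^{-1}$ is fine, and $g_d$ is monic so it survives reduction mod $p$) to write down the explicit degree-$d$ morphism $(x,y)\mapsto(\varphi_d(x),g_d(x)y)$ from $\cC_1$ to $E'\colon y^2=x^3-4x$, and then pass from $E'$ to $E\colon y^2=x^3+x$ by the rational $2$-isogeny with kernel $\langle(0,0)\rangle$ (the formula $y^2=x^3+ax\mapsto y^2=x^3-4ax$ is correct), so that $L_E=L_{E'}\mid L_{\cC_1}$ by Remark~\ref{r1}. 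Your argument buys uniformity in the parity of $d$, avoids the differential/Jacobian-decomposition bookkeeping, and is defined over the prime field; the paper's version has the advantage of fitting into the eigenspace-of-differentials framework it reuses elsewhere (e.g.\ in Remark~\ref{rem4.3} and Remark~\ref{kohelsmithresult}), where knowing \emph{which} piece of $\cJ(\cX)$ the elliptic factor occupies matters. The auxiliary points you flag (separability of $(x^2-4)\varphi_d(x)$ from Lemma~\ref{sep} and $\varphi_d(\pm2)\neq0$, extension of the rational map to the smooth complete models, equality of $L$-polynomials of isogenous elliptic curves) are all indeed routine, so I see no gap.
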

\begin{proof}
First consider the case that $d$ is odd.
We use the notations from the proof
of Theorem~\ref{mainodd} and we let $\zeta$ in some
extension of $\FF_q$ be a primitive $4d$-th root of unity.
The curve $\cX$ admits an automorphism $\rho$ given by
$\rho(x,y)=(\zeta^2x,\zeta y)$.
 The quotient of $\cX$ by the group generated by $\rho^4$ is the elliptic curve $\cE$ given by $$y^2=x^3+x$$
 and an explicit quotient map is given by $$(x,y)\mapsto(x^d, x^{(d-1)/2}y).$$
 Note that although the elements of the group generated by $\rho^4$ may not be defined
 over $\FF_q$, the group is, which explains why the
 quotient curve and the map to it are defined over $\FF_q$.
A regular differential on $\cX$ invariant under $\rho^4$ is $ \omega_{(d+1)/2}=x^{(d-1)/2}dx/y$; observe that this
differential is a pull back of a regular differential on $\cC_1$.

As a consequence, the elliptic curve $\cE$
is up to isogeny contained in the
Jacobian $\cJ(\cC_1)$. This implies
the lemma for $d$ odd.

Now assume $d=2e$ is even. The curve $\cY\colon y^2=x^{4e}+1$ covers the given elliptic curve, with an explicit
covering map given by $(x,y)\mapsto (x^{2e},x^ey)$.
Note that $x^{e-1}\frac{dx}{y}$ is a pull-back to $\cY$ of a regular
differential on the elliptic curve.
The proof of Proposition~\ref{mainevench} shows that
$\cJ(\cC_1)$ is up to isogeny an abelian subvariety of $\cJ(\cY)$,
and the regular differentials on $\cY$ coming from $\cC_1$
are the ones invariant under the action of the automorphism
denoted $\sigma'$. As the differential
$x^{e-1}\frac{dx}{y}$ is invariant under $\sigma'$, it follows that
the elliptic curve is up to isogeny contained in $\cJ(\cC_1)$.
This implies the lemma for $d$ even.
\end{proof}
\begin{pro}\label{2mod4}
The analogue of Conjecture~\ref{conj} holds in the special case $d\equiv2~(\bmod~4)$.
\end{pro}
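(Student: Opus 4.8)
The plan is to reduce the statement to the odd-degree results already proved, by writing $d=2m$ and using the identity $\varphi_d(x)=\varphi_m(x^2-2)$; the hypothesis $d\equiv 2\pmod 4$ is used precisely to guarantee that $m$ is \emph{odd}. After the substitution $t=x^2-2$ (so $x^2=t+2$ and $x^2-4=t-2$) the curve $\cC$ becomes $y^2=\varphi_m(t)$ and $\cC_1$ becomes $y^2=(t-2)\varphi_m(t)$, and I would study both through the action of the Klein four-group generated by the hyperelliptic involution $\iota_1\colon(x,y)\mapsto(x,-y)$ and by $\iota_2\colon(x,y)\mapsto(-x,y)$; all three nontrivial involutions are defined over $\FF_p$.

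First I would carry out the bookkeeping used in the proofs of Theorems~\ref{maineven}, \ref{x2mainodd} and \ref{mainodd}: compute the action of $\iota_1,\iota_2$ on the basis $\omega_i=x^{i-1}\,dx/y$ of regular differentials, and read off the quotient curves. This gives, over $\FF_p$, isogeny decompositions
\[
\cJ(\cC)\sim\cJ(\cC')\times\cJ(\cD),\qquad \cJ(\cC_1)\sim\cJ(\cD_1)\times\cJ(\cC'_1),
\]
where $\cC'=\cC/\langle\iota_2\rangle$ and $\cD=\cC/\langle\iota_1\iota_2\rangle$ have affine equations $y^2=\varphi_m(t)$ and $y^2=(t+2)\varphi_m(t)$, while $\cD_1=\cC_1/\langle\iota_2\rangle$ and $\cC'_1=\cC_1/\langle\iota_1\iota_2\rangle$ have equations $y^2=(t-2)\varphi_m(t)$ and $y^2=(t^2-4)\varphi_m(t)$; in particular $\cC'$ and $\cC'_1$ are exactly the degree-$m$ analogues of $\cC$ and $\cC_1$. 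Since $m$ is odd, $\varphi_m$ is an odd polynomial, so $t\mapsto-t$ defines an isomorphism $\cD_1\cong\cD$ over $\FF_p$. Combining this with Proposition~\ref{p2.1} (and with the fact that a divisor of $(1+qt)^N$ is again a power of $1+qt$) I would deduce the equivalences
\[
\cC\ \text{maximal over}\ \FF_{q^2}\iff\cD\ \text{and}\ \cC'\ \text{both maximal},
\]
\[
\cC_1\ \text{maximal over}\ \FF_{q^2}\iff\cD\ \text{and}\ \cC'_1\ \text{both maximal}.
\]

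It then remains to prove the two implications. For ``$\Leftarrow$'', suppose $q\equiv-1\pmod 4$ and $\cC$ is maximal; then $\cD$ is maximal, so Theorem~\ref{x2mainodd} applied to the odd integer $m$ gives $q\equiv-1\pmod{2m}$, and since $m$ is odd this combines with $q\equiv-1\pmod 4$ to give $q\equiv-1\pmod{4m}$. Theorem~\ref{mainodd} (for $m$) then makes $\cC'_1$ maximal, hence $\cC_1$ is maximal. For ``$\Rightarrow$'', suppose $\cC_1$ is maximal; then $\cD$ and $\cC'_1$ are maximal, and Lemma~\ref{Lpols} applied with $m$ in place of $d$ shows that the $L$-polynomial of $y^2=x^3+x$ divides that of $\cC'_1$, so $y^2=x^3+x$ is maximal over $\FF_{q^2}$. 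Since that elliptic curve (of $j$-invariant $1728$) is maximal over a square field exactly when $q\equiv-1\pmod 4$, we get $q\equiv-1\pmod 4$. As before, $\cD$ maximal forces $q\equiv-1\pmod{2m}$, hence $q\equiv-1\pmod{4m}$, and Theorem~\ref{mainodd} makes $\cC'$ maximal, so $\cC$ is maximal.

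Most of the work is the bookkeeping; the one genuine point is the observation that maximality of \emph{either} $\cC$ or $\cC_1$ already forces maximality of the common Jacobian factor $\cD$, which Theorem~\ref{x2mainodd} pins down to $q\equiv-1\pmod{2m}$, so that the single extra congruence $q\equiv-1\pmod 4$ — supplied in the ``$\Rightarrow$'' direction by Lemma~\ref{Lpols} and the $j=1728$ curve — already places us in the range $q\equiv-1\pmod{4m}$ covered by Theorem~\ref{mainodd}; in particular the still open Conjecture~\ref{conj} for odd degree $m$ is not needed. A minor point to check is the degenerate case $m=1$ (that is, $d=2$), where $\cC$, $\cD$ and $\cC'$ have genus $0$ and are automatically maximal; the argument goes through unchanged.
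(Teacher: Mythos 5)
Your proof is correct, and it differs from the paper's mainly in how it closes the argument. The shared core is the reduction to the odd integer $m=d/2$ via $\varphi_d(x)=\varphi_m(x^2-2)$: like the paper, you pass to the quotients of $\cC$ and $\cC_1$ by $(x,y)\mapsto(-x,\pm y)$, which are the curves $y^2=(t\pm 2)\varphi_m(t)$ (and, for $\cC_1$, also $y^2=(t^2-4)\varphi_m(t)$), use Theorem~\ref{x2mainodd} to force $q\equiv -1\ (\bmod\ 2m)$, and use Lemma~\ref{Lpols} to extract $q\equiv -1\ (\bmod\ 4)$ (you apply the lemma to $\cC_1'$ with the odd parameter $m$, while the paper applies it to $\cC_1$ itself with even $d$; both are covered by the lemma), so that in each direction one lands at $q\equiv -1\ (\bmod\ 4m)$, i.e.\ $q\equiv -1\ (\bmod\ 2d)$. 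At that point the paper simply quotes Lemma~\ref{mainevench}, whose proof runs through the curve $\cY\colon y^2=x^{2d}+1$ and Theorem~\ref{main2}, to convert this congruence into maximality of both $\cC$ and $\cC_1$. You instead make the Klein-four splittings $\cJ(\cC)\sim\cJ(\cC')\times\cJ(\cD)$ and $\cJ(\cC_1)\sim\cJ(\cD_1)\times\cJ(\cC_1')$ explicit (the genus and differential bookkeeping you sketch does check out, exactly as in the paper's proofs of Theorems~\ref{maineven} and \ref{mainodd}), obtain two-way ``maximal iff both quotient curves are maximal'' statements, and finish with Theorem~\ref{mainodd} applied to $m$. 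What your route buys is independence from Lemma~\ref{mainevench} and from the curve $y^2=x^{2d}+1$ altogether, at the cost of having to justify the two isogeny decompositions; the paper's route is shorter because the only decomposition it needs is already packaged inside Lemma~\ref{mainevench}. Your remark about the degenerate case $m=1$ (i.e.\ $d=2$) is correct and needs no special treatment.
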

\begin{proof}
Write $d=2e$ with $e$ a positive, odd integer and let $q$ be a prime power
satisfying $\gcd(q,2e)=1$.
One decomposes, up to isogeny, the Jacobian $\cJ(\cC_1)$ of
the curve $\cC_1$ given by $y^2=(x^2-4)\varphi_{2e}(x)$ as follows.
Note that $\cC_1$ admits the involution $\alpha$ given by $\alpha(x,y)=(-x,y)$.
Since $\varphi_{2e}(x)=\varphi_e(\varphi_2(x))=\varphi_e(x^2-2)$,
the quotient by $\alpha$ is the curve $\cD$ with affine equation
$v^2=(t-4)\varphi_e(t-2)$ (with quotient map $(x,y)\mapsto (x^2,y)$).
Using the variables $v$ and $u:=t-2$, this equation becomes $v^2=(u-2)\varphi_e(u)$.

Using that the curve $\cD$ is isomorphic to the one with equation $v^2=(u+2)\varphi_e(u)$ (just change the sign of $u$ and use that $e$ is odd),
Theorem~\ref{x2mainodd} implies that if $\cC_1$ is maximal
over $\FF_{q^2}$, then so is $\cD$, and therefore $q\equiv -1(\bmod~2e)$.
From Lemma~\ref{Lpols}, the maximality of $\cC_1$ over $\FF_{q^2}$
implies maximality of the elliptic curve given by $y^2=x^3+x$ over $\FF_{q^2}$.
The latter maximality is equivalent to $q\equiv -1~(\bmod~4)$.

Using that $e$ is odd, one concludes that if $\cC_1$ is maximal over
$\FF_{q^2}$, then $q\equiv -1~(\bmod~4e)$. Hence Proposition~\ref{mainevench}
implies the implication $\mbox{\rm (i)}\Rightarrow\mbox{\rm (ii)}$ of
Conjecture~\ref{conj} in this case.

For the other implication, assume that $\cC\colon y^2=\varphi_{2e}(x)$
is maximal over $\FF_{q^2}$ and that $q\equiv -1~(\bmod~4)$.
Writing $\varphi_{2e}(x)=\varphi_e(x^2-2)$ it is clear that
the map $(x,y)\mapsto (x^2-2,xy)$ yields a nonconstant morphism from $\cC$
to the curve with equation $s^2=(t+2)\varphi_e(t)$.
Hence the latter curve is maximal over $\FF_{q^2}$, which by
Theorem~\ref{mainodd} implies $q\equiv -1~(\bmod~2e)$.
So again one concludes $q\equiv -1~(\bmod~4e)$,
and the maximality of $\cC_1$ over $\FF_{q^2}$ follows
from Proposition~\ref{mainevench}.
\end{proof}

Similar ideas allow one to obtain some results in the case $d\equiv 0~(\bmod~4)$:
\begin{pro}\label{0mod4}
Suppose the integer $d>0$ satisfies $4|d$, then the analogue of Conjecture~\ref{conj} holds.
\end{pro}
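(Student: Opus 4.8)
The plan is to imitate the proof of Proposition~\ref{2mod4}, pushing through the factorization $d=4e$ one step further. Write $d=4e$ and let $q$ be a prime power with $\gcd(q,2e)=1$ (equivalently $\gcd(q,2d)=1$, since $d$ is a power of $2$ times $e$ with $e$ odd is not forced here — but in any case $\gcd(q,2d)=1$ is the relevant hypothesis, and note $q$ is odd). Using $\varphi_d(x)=\varphi_{d/2}(x^2-2)$ repeatedly, first handle the implication $\mbox{(i)}\Rightarrow\mbox{(ii)}$: suppose $\cC_1\colon y^2=(x^2-4)\varphi_d(x)$ is maximal over $\FF_{q^2}$. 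As in Proposition~\ref{2mod4}, the involution $\alpha(x,y)=(-x,y)$ has quotient $\cD$ with equation (after the substitution $u=x^2-2$) $v^2=(u-2)\varphi_{d/2}(u)$, which is isomorphic to $v^2=(u+2)\varphi_{d/2}(u)$ if $d/2$ is even via Remark~\ref{rem1.2}. Since $d/2=2e$ is even, Theorem~\ref{maineven} applies: maximality of $\cD$ over $\FF_{q^2}$ forces $q\equiv-1~(\bmod~4\cdot(d/2))$ or $q\equiv d/2+1~(\bmod~4\cdot(d/2))$, i.e.\ $q\equiv-1$ or $2e+1~(\bmod~8e)$. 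Simultaneously, Lemma~\ref{Lpols} gives that maximality of $\cC_1$ forces maximality of $y^2=x^3+x$ over $\FF_{q^2}$, which is equivalent to $q\equiv-1~(\bmod~4)$. Combining $q\equiv-1~(\bmod~4)$ with $q\equiv-1$ or $2e+1~(\bmod~8e)$ should, since $d=4e$ and $e$ need not be odd, be analyzed carefully; I expect it yields $q\equiv-1~(\bmod~2d)$, at which point Proposition~\ref{mainevench} (i.e.\ Lemma~\ref{mainevench}) gives both $\cC$ and $\cC_1$ maximal, in particular (ii).

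For the converse $\mbox{(ii)}\Rightarrow\mbox{(i)}$: assume $\cC\colon y^2=\varphi_d(x)$ is maximal over $\FF_{q^2}$ and $q\equiv-1~(\bmod~4)$. Writing $\varphi_d(x)=\varphi_{d/2}(x^2-2)$, the map $(x,y)\mapsto(x^2-2,xy)$ gives a nonconstant morphism from $\cC$ to the curve $s^2=(t+2)\varphi_{d/2}(t)$; since $d/2=2e$ is even, Theorem~\ref{maineven} applies to this target curve, so its maximality forces $q\equiv-1$ or $2e+1~(\bmod~8e)$. Together with $q\equiv-1~(\bmod~4)$ this should again give $q\equiv-1~(\bmod~2d)$, and then Lemma~\ref{mainevench} gives maximality of $\cC_1$, which is (i). The two directions are thus symmetric, each reducing a ``level $d$'' statement to a ``level $d/2$'' statement where one of the already-proved theorems (\ref{maineven} for the even case) applies directly.

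The main obstacle I anticipate is purely the congruence bookkeeping: one must check that $q\equiv-1~(\bmod~4)$ together with $q\equiv-1$ or $2e+1~(\bmod~8e)$ is genuinely equivalent to $q\equiv-1~(\bmod~2d)=q\equiv-1~(\bmod~8e)$, and in particular that the branch $q\equiv 2e+1~(\bmod~8e)$ is incompatible with $q\equiv-1~(\bmod~4)$ (or else collapses appropriately). Since $2e+1\equiv 3~(\bmod~4)$ exactly when $e$ is odd and $\equiv 1~(\bmod~4)$ when $e$ is even, the analysis splits on the parity of $e$, and one may need the finer structure coming from iterating $\varphi_{d}(x)=\varphi_{d/4}((x^2-2)^2-2)$ when $e$ itself is even (equivalently $8\mid d$), applying the induction/reduction one more time. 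In the cleanest writeup I would phrase the argument as an induction on the $2$-adic valuation of $d$: the base case $d\equiv2~(\bmod~4)$ is Proposition~\ref{2mod4}, and the inductive step reduces level $d$ (with $4\mid d$) to level $d/2$ via $\varphi_d(x)=\varphi_{d/2}(x^2-2)$, the quotient-by-$\alpha$ construction, Lemma~\ref{Lpols}, and Lemma~\ref{mainevench}, exactly as sketched above. I do not expect any new geometric input beyond what is already assembled in Sections~\ref{proofs} and \ref{five}; the whole content is organizing the divisibility conditions correctly.
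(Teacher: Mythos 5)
Your overall strategy is exactly the paper's: reduce from level $d$ to level $d/2$ via $\varphi_d(x)=\varphi_{d/2}(x^2-2)$, use the quotient by $(x,y)\mapsto(-x,y)$ (resp.\ the covering $(x,y)\mapsto(x^2-2,xy)$) so that Theorem~\ref{maineven} applies to the even degree $d/2$, use Lemma~\ref{Lpols} to force $q\equiv-1~(\bmod~4)$, combine the congruences to get $q\equiv-1~(\bmod~2d)$, and conclude with Lemma~\ref{mainevench}. (Applying Lemma~\ref{Lpols} directly to $\cC_1$ at level $d$, rather than to the covered curve $w^2=(u^2-4)\varphi_{d/2}(u)$ as the paper does, is a harmless simplification.)

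However, the one step you flag as ``the whole content'' --- the congruence bookkeeping --- is left unresolved in your write-up, and the reason is a concrete misquotation of Theorem~\ref{maineven}. Applied to a curve $y^2=(u+2)\varphi_m(u)$ with $m$ even, the theorem gives $q\equiv-1~(\bmod~4m)$ or $q\equiv 2m+1~(\bmod~4m)$. With $m=d/2$ the second branch is $q\equiv d+1~(\bmod~2d)$, i.e.\ $4e+1~(\bmod~8e)$ in your notation $d=4e$, not $q\equiv d/2+1=2e+1$ as you wrote. Since $4\mid d$, one has $d+1\equiv 1~(\bmod~4)$, which is incompatible with $q\equiv-1~(\bmod~4)$ for \emph{every} $e$; there is no parity split on $e$, no need to iterate via $\varphi_{d/4}((x^2-2)^2-2)$, and no induction on the $2$-adic valuation of $d$. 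With the branch stated correctly, the elimination is immediate in both directions and your argument becomes precisely the paper's proof of Proposition~\ref{0mod4}; as written, the claimed equivalence ``$q\equiv-1~(\bmod~4)$ together with $q\equiv-1$ or $2e+1~(\bmod~8e)$ yields $q\equiv-1~(\bmod~2d)$'' is both unproved and, for odd $e$, actually false for the congruences you stated (e.g.\ $2e+1\equiv 3~(\bmod~4)$ then), which is exactly why you felt an extra inductive step might be needed.
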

\begin{proof}
With notations as above, write $d=2e$.
The map $(x,y)\mapsto (x^2-2, y)$ shows that
$\cC_1$ covers the curve given by $v^2=(u-2)\varphi_{e}(u)$.
Hence as before, by Theorem~\ref{maineven} one concludes
that if $\cC_1$ is maximal over $\FF_{q^2}$,
then either $q\equiv -1~(\bmod~4e)$ or $q\equiv 2e+1~(\bmod~4e)$.

Similarly, the map $(x,y)\mapsto (x^2-2,xy)$ shows that
$\cC_1$ covers the curve with affine equation
$w^2=(u^2-4)\varphi_e(u)$.
Hence maximality of $\cC_1$ over $\FF_{q^2}$ implies
using Lemma~\ref{Lpols} that the elliptic curve with
equation $y^2=x^3+x$ is maximal over $\FF_{q^2}$.
As a consequence $q\equiv -1~(\bmod~4)$.

Combining the congruences for $q$ we conclude that
maximality implies $q\equiv -1(\bmod~2d)$.
Hence by Proposition~\ref{mainevench} the curve given
by $y^2=\varphi_d(x)$ is maximal over $\FF_{q^2}$,
which is what we wanted to show.

Vice versa, is $\cC$ with affine equation
$y^2=\varphi_d(x)$ maximal over $\FF_{q^2}$ and is moreover
$q\equiv -1~(\bmod~4)$,
then since $(x,y)\mapsto (x^2-2,xy)$ shows that $\cC$
covers the curve given by $v^2=(u+2)\varphi_e(u)$,
we conclude using Theorem~\ref{maineven} that
either $q\equiv -1~(\bmod~4e)$ or $q\equiv 2e+1~(\bmod~4e)$.
However, the additional condition on $q$ shows that
the latter congruence is impossible,
so one concludes $q\equiv -1~(\bmod~4e)$.
But then Proposition~\ref{mainevench} implies maximality
of $\cC_1$ over $\FF_{q^2}$, which is what we wished to show.
\end{proof}

Evidently, combining Lemmas~\ref{mainevench} and \ref{Lpols}, and
Propositions~\ref{2mod4} and \ref{0mod4}
one obtains a proof of Theorem~\ref{evencheb}.

\vspace{\baselineskip}
We will now discuss Conjecture~\ref{conj}.
To this end, we first describe an attempt
to prove the conjecture which unfortunately
seems to fail.
\begin{remark}\label{rem4.3}{\rm
We continue with the notations introduced
in the proofs of Theorem~\ref{mainodd}
and Lemma~\ref{Lpols}; in particular, the integer $d>0$ is assumed to be odd. 
A natural way to describe a decomposition of a Jacobian variety such
as $\cJ(\cX)$ is in terms of suitable endomorphisms of this Jacobian.
We refer to the paper of Kani and Rosen \cite{Kani-Rosen} which studies the special
endomorphisms generated by those coming from automorphisms of the curve.

Consider the action of $1+\tau$ and of
$1+\rho^4+\rho^8+\ldots+\rho^{4d-4}$ on $\cJ(\cX)$.
As endomorphisms on $\cJ(\cX)$ these maps are
defined over the prime field of $\FF_q$.
Moreover since $1+\tau$ acts as $0$ on the
regular differentials on $\cX$ which are pulled back
from $\cC$, and as multiplication by $2$ on the
regular differentials pulled back from $\cC_1$,
it follows that $(1+\tau)(\cJ(\cX))$ is isogenous
to $\cJ(\cC_1)$.
An analogous argument shows that
\[
(1+\rho^4+\ldots+\rho^{4d-4})(1+\tau)(\cJ(\cX))
\]
is isogenous to the elliptic curve $\cE$.
Since $1+\rho^4+\ldots+\rho^{4d-4}$ acts as
multiplication by $d$ on the differential
$\omega_{(d+1)/2}$ and as $0$ on the
differentials $\omega_j+\omega_{d+1-j}$ ($1\leq j\leq (d-1)/2$),
it follows that the abelian variety $\cA\subset \cJ(\cX)$
defined by
\[\cA:= (-d+1+\rho^4+\ldots+\rho^{4d-4})(1+\tau)(\cJ(\cX))\]
is defined over the prime field of $\FF_q$, and
$\dim(\cA)=(d-1)/2$, and $\cJ(\cC_1)\sim\cE\times \cA$
(an isogeny defined over the prime field of $\FF_q$).
As a result,
\[\cJ(\cX)\sim \cJ(\cC)\times\cE\times \cA.\]

Suppose that we would know that $\cA$ and $\cJ(\cC)$
are isogenous over $\FF_{q^2}$. Then in particular
the $L$-polynomial $L_{\cC}(t)$
divides $L_{\cC_1}(t)$ (here we take $L$-polynomials
over $\FF_{q^2}$).
Clearly, this would imply the case $d$ odd of
Conjecture~\ref{conj}.

 A rather natural idea for showing that
 indeed the
abelian varieties $\cA$ and $\cJ(\cC)$  are isogenous
over $\FF_{q^2}$, is to look for endomorphisms in
the subalgebra $\ZZ[\rho,\tau]\subset\mbox{End}(\cJ(\cX))$ and restrict those to $\cA$ or to
$(1-\tau)(\cJ(\cX))\sim \cJ(\cC)$. Unfortunately, this cannot work,
as is seen by the following argument.

Consider the regular differentials on $\cX$ that
correspond to $\cA$ and to $\cJ(\cC)$.
The action of $\ZZ[\rho,\tau]$ on the regular
differentials on $\cX$ has the invariant
subspaces $V_j$ spanned by $\omega_j$ and $\omega_{d+1-j}$. If $d>1$ then $\dim(V_1)=2$ and $\tau,\rho$ act
on $V_1$ by the matrices
${{0\;\;-1}\choose{-1\;\;\;0}}$ and
${{\zeta\;\;\;\;\;0}\choose{0\;-\zeta^{-1}}}$, respectively.
We look for an element in the $\ZZ$-algebra generated
by these two matrices that sends one of the two
lines spanned by ${1}\choose{1}$ or by ${1}\choose{-1}$,
to the other.
However, such an element does not exist.
}\end{remark}

\begin{remark}\label{kohelsmithresult}
{\rm In fact Conjecture~\ref{conj} is true in the case that $d$ is (an odd) prime.
Namely, as a special case of Proposition~14 in the paper \cite{KS} by Kohel and Smith,
one obtains that $\cJ(\cX)$ is isogenous to $\mathcal{E}\times \cJ(\cC)\times \cJ(\cC)$
with $\mathcal{E}$ the elliptic curve given by $y^2=x^3-x$ and
$\cC\colon y^2=\varphi_d(x)$. This means that the $L$-polynomial of $\cX$ over
$\FF_{q^2}$ is the product of that of $\mathcal{E}$ and two copies of that of $\cC$. 

As we saw in the proof of Theorem~\ref{evencheb}, the $L$-polynomial of $\cX$
is also the product of that of $\cC$ and that of $\cC_1\colon y^2=(x^2-4)\varphi_d(x)$.
Combining the two factorizations, one concludes that for $d>2$ prime, the $L$-polynomial
of $\cC_1$ equals the product of that of $\cC$ and that of $\mathcal{E}$.
This shows Conjecture~\ref{conj} in this case.
And so by Tate's classical work \cite{Tate} we have that $\cJ(\cC_1)$ is isogenous to $\mathcal{E}\times \cJ(\cC)$.

A natural approach to proving Conjecture~\ref{conj} would be, to show that also for
{\em composite} odd $d$ one has an isogeny $\cJ(\cX)\sim \mathcal{E}\times \cJ(\cC)\times \cJ(\cC)$
defined over $\FF_{q^2}$. Although we have not been able to show this, we can in fact
prove the weaker statement that these abelian varieties are isogenous over the
algebraic closure $\overline{\FF_q}$. Indeed, consider the subgroup $G$ of
$\mbox{Aut}(\cX)$ generated by $r:=\rho^4$ and $s:=\tau$. Then $r$ has order $d$
and $s$ has order $2$. Moreover $srs=r^{-1}$, so $G$ is a dihedral group of order $2d$ (and in the
case considered here, $d$ is odd). 

Following Paulhus \cite[\S~3.1.2]{Paulhus}, who applies Kani-Rosen theory (specifically, \cite[Theorem~B]{Kani-Rosen})
to the subgroups $H_1=\langle r\rangle$ and $H_j=\langle sr^j \rangle$ ($2\leq j\leq 2d+1$) of $G$,
and who observes that because $d$ is odd, all groups $H_j$ ($j\neq 1$) are conjugate in $G$ and therefore
the quotients $\cX/H_j$ are isomorphic, one concludes
\[
\cJ(\cX)\times \cJ(\cX/G) \times \cJ(\cX/G) \sim \cJ(\cX/\langle r\rangle)\times \cJ(\cX/\langle s\rangle\times \cJ(\cX/\langle s\rangle).
\]
We analyze the quotient curves appearing here.
As we saw in the proof of Theorem~\ref{mainodd}, $\cX/\langle s\rangle=\cX/\langle \tau\rangle\cong \cC$
since $d$ is odd.
Moreover, the proof of Lemma~\ref{Lpols} shows $\cX/\langle r\rangle=\cX/\langle\rho^4\rangle\cong \cE$,
and up to scalars, $x^{(d-1)/2}dx/y$ is the only regular differential on $\cX$ invariant under
the action of $r$. As this differential is not fixed by $s=\tau$, no regular differentials fixed by
every automorphism in the group $G$ exist. Therefore the genus of $\cX/G$ equals $0$, so
$\cJ(\cX/G)=(0)$. So the displayed isogeny in fact reads
\[  \cJ(\cX)\sim \cE \times \cJ(\cC)\times \cJ(\cC),
\]
which is what we wished to show. Adapting this line of reasoning so that it works over $\FF_{q^2}$ as well,
would lead to a proof of Conjecture~\ref{conj} but unfortunately, so far we have not been able
to do so.
}\end{remark}

\begin{remark}
\rm{Let $d>0$ be any integer, and let $q$
be a prime power with $\gcd(q,2d)=1$. If $3|d$, then $v^2= \varphi_{d}(u)$ covers the elliptic
curve $v^2= \varphi_{3}(u)=u^3-3u$ since in this case (see Remark~\ref{rem1.2}) we have
  $\varphi_{d}(u)=\varphi_{3}(\varphi_{d/3}(u))$. Hence if
in this case the curve given by $v^2= \varphi_d(u)$ is maximal over $\FF_{q^2}$, then the elliptic curve
 $v^2= \varphi_{3}(u)$ is also maximal over $\FF_{q^2}$.
The latter maximality occurs precisely
when $q \equiv -1(\bmod~4)$.
As a consequence, for $d$ a multiple of $3$
the assumption $q\equiv -1(\bmod~4)$
mentioned in statement (ii) of
Conjecture~\ref{conj} can be deleted.
}\end{remark}

\begin{remark}\label{cheb3mod4}{\rm
In \cite{TTV}, the curve $\cC\colon y^2=\varphi_d(x)$ is denoted by $\cC_0$; one of
the results of that paper (\cite[Section~3.2]{TTV})
is that in case $d=\ell$ is an odd prime number,
then the endomorphism algebra of $\cJ(\cC)$ contains
the field $K:=\QQ(\sqrt{-1},\zeta_\ell+\zeta_\ell^{-1})$.
Note that $[K:\QQ]=\ell-1=2g$ where $g$ is the genus of $\cC$.
Moreover, provided $\ell\neq 5$, regarding
$\cJ(\cC)$ as an abelian variety in characteristic~$0$,
by \cite[Proposition~5]{TTV} it has no nontrivial abelian subvarieties (over any
field extension). This means that $\cJ(\cC)$
is a so-called CM abelian variety. The extension
$K/\QQ$ is Galois (even abelian), with Galois
group $G\cong \ZZ/2\ZZ\times \FF_\ell^\times/\pm 1$;
note that this group is cyclic precisely when
$\ell\equiv 3(\bmod~4)$.

The CM type corresponding to $\cJ(\cC)$ is computed
in \cite{TTV}. One identifies it with the
subset $\Phi\subset G$ given by
\[
\Phi=\left\{(0,\pm 1),\,(1,\pm 2),\,(0,\pm 3), \ldots\right\}
\]
of cardinality $(\ell-1)/2$.

In \cite[Theorem~3.1]{Blake} it is explained how
the slopes of the Newton polygon of Frobenius on a
reduction of $\cC$ modulo a prime $p$ can be
determined from the decomposition group $D\subset G$
at $p$: the possible slopes are $\#(Dg\cap \Phi)/\#Dg$
with $g$ an element of $G$.
Note that the group $D$ (at any prime $p$
with $\gcd(p,2\ell)=1$ which means, at any
prime that does not ramify in $K$) is
the cyclic group generated by
$\left((p-1)/2(\bmod~2), \pm p(\bmod~\ell)\right)$.
In particular, taking $p\equiv 1(\bmod~4)$ one has
that $D\subset (0)\times \FF_\ell^\times/\pm 1$.
Hence taking $g=(1,\pm 1)\in G$ one finds $Dg\cap \Phi=
\emptyset$. As a result, one of the slopes is $0$,
implying that the $p$-rank of $\cJ(\cC)$ is positive.
In particular, this provides an alternative
proof of Proposition~\ref{p2.3} for the
special case of the polynomial $\varphi_d(x)$
with $d>1$ odd. Indeed,
taking $\ell$ any prime divisor of $d$, the
equality $\varphi_d(x)=\varphi_\ell(\varphi_{d/\ell}(x))$ implies that the curve
with equation $y^2=\varphi_\ell(x)$
is covered by the curve given by
 $y^2=\varphi_d(x)$.
Hence if the latter curve is maximal over $\FF_{q^2}$ (and $\gcd(q,2\ell)=1$),
then so is the first, and therefore
the characteristic of $\FF_{q^2}$ is
$\equiv 3(\bmod~4)$.

\vspace*{.4cm}
We illustrate the use of CM theory
also in the next result.
}\end{remark}
\begin{pro}\label{CMresult}
Let $q$ be a prime power
with $\gcd(q,10)=1$. If the hyperelliptic curve given by
 $y^2=x^5-5x^3+5x$ is maximal over $\FF_{q^2}$, then
the characteristic of $\FF_q$ is either
$11(\bmod~20)$ or $19(\bmod~20)$.
\end{pro}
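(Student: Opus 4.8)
The plan is to extract the congruence modulo~$4$ from Proposition~\ref{p2.3} and the congruence modulo~$5$ from Complex Multiplication, following the method of Remark~\ref{cheb3mod4}. First I would note that $x^5-5x^3+5x=\varphi_5(x)$, so the curve under consideration is $\cC\colon y^2=\varphi_5(x)$; by Lemma~\ref{sep} it is smooth of genus~$2$ with good reduction away from $p=2,5$. Since $\varphi_5(x)=x(x^4-5x^2+5)=xg(x^2)$ with $g(x)=x^2-5x+5$ separable and $g(0)=5\neq 0$, Proposition~\ref{p2.3} applies directly: if $\cC$ is maximal over $\FF_{q^2}$ then $p\equiv 3~(\bmod~4)$. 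It remains to determine the residue of $p$ modulo~$5$.

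For this, recall from \cite{TTV} (compare Remark~\ref{cheb3mod4}) that in characteristic $0$ the abelian surface $\cJ(\cC)$ has CM by the degree-$4$ CM field $K=\QQ(\sqrt{-1},\zeta_5+\zeta_5^{-1})=\QQ(i,\sqrt 5)$, with Galois group $G=\mathrm{Gal}(K/\QQ)\cong\ZZ/2\ZZ\times\FF_5^\times/\{\pm1\}$ and CM type $\Phi=\{(0,\pm1),(1,\pm2)\}\subset G$ (the specialization to $\ell=5$ of the type recorded in Remark~\ref{cheb3mod4}). Maximality of $\cC$ over $\FF_{q^2}$ forces the $p$-torsion of $\cJ(\cC)$ to vanish, i.e.\ $\cJ(\cC)\bmod p$ is supersingular, so every slope of its Newton polygon equals $1/2$. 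For $\gcd(p,10)=1$ the prime $p$ is unramified in $K$, and the decomposition group $D\subset G$ at $p$ is generated by $\mathrm{Frob}_p=\bigl((p-1)/2~(\bmod~2),\,\pm p~(\bmod~5)\bigr)$; since we already know $p\equiv 3~(\bmod~4)$ the first coordinate is $1$, and because $G$ has exponent~$2$ the group $D$ has order~$2$. By \cite[Theorem~3.1]{Blake} the slopes of the Newton polygon are the numbers $\#(Dg\cap\Phi)/\#Dg$, $g\in G$. If $p\equiv\pm2~(\bmod~5)$ then $D=\{(0,\pm1),(1,\pm2)\}=\Phi$, so one coset of $D$ lies entirely in $\Phi$ and the other is disjoint from it, giving slopes $1$ and $0$ — not supersingular. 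If instead $p\equiv\pm1~(\bmod~5)$ then $D=\{(0,\pm1),(1,\pm1)\}$, and each of the two cosets of $D$ meets $\Phi$ in exactly one element, so all slopes equal $1/2$. Hence supersingularity of $\cJ(\cC)\bmod p$ forces $p\equiv\pm1~(\bmod~5)$.

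Finally I would combine the two conditions: $p\equiv 3~(\bmod~4)$ and $p\equiv\pm1~(\bmod~5)$, which by the Chinese Remainder Theorem is precisely $p\equiv 11~(\bmod~20)$ or $p\equiv 19~(\bmod~20)$, as claimed. The finite bookkeeping with the two cosets is routine; the delicate point — and the main thing to get right — is the CM input from \cite{TTV} for the value $\ell=5$, which is the case excluded from the simplicity statement of Remark~\ref{cheb3mod4}: one must identify $G$ with $\ZZ/2\ZZ\times\FF_5^\times/\{\pm1\}$ in the normalization under which Blake's slope formula and the formula for $\mathrm{Frob}_p$ simultaneously hold, and verify that the CM type $\Phi$ of $\cJ(\cC)$ is indeed as stated — in particular that it is compatible with Proposition~\ref{p2.3}, which already rules out supersingular reduction when $p\equiv 1~(\bmod~4)$; note that the slope computation above is only ever carried out in the range $p\equiv 3~(\bmod~4)$ to which Proposition~\ref{p2.3} restricts us.
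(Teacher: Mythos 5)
Your proposal is correct and follows essentially the same route as the paper: Proposition~\ref{p2.3} supplies the congruence $p\equiv 3~(\bmod~4)$, and Blake's slope criterion applied to the CM type $\Phi=\{(0,\pm1),(1,\pm2)\}$ of $\cJ(\cC)$ from \cite{TTV} (the case $\ell=5$ of Remark~\ref{cheb3mod4}) excludes $p\equiv\pm2~(\bmod~5)$, after which the Chinese Remainder Theorem gives the stated classes mod~$20$. The only, harmless, difference is that you invoke full supersingularity (all slopes equal to $\frac12$), whereas the paper's proof only uses that a maximal curve admits no slope equal to $0$ --- a distinction the paper itself points out in the remark following its proof.
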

\begin{proof}
Note that $x^5-5x^3+5x=\varphi_5(x)$.
We will show the result by using the CM theory
described above in Remark~\ref{cheb3mod4}.
We therefore use the notations introduced
in that remark, for the special case $\ell=5$.

Let $p$ be the characteristic of $\FF_q$.
By Proposition~\ref{p2.3} (and
alternatively, by Remark~\ref{cheb3mod4}),
maximality of the given curve $\cC$ implies
that $p\equiv 3(\bmod~4)$.
Hence the decomposition group $D$ at p
in $G=\mbox{Gal}(K/\QQ)\cong \ZZ/2\ZZ\times\FF_5^\times/\pm1$ is generated by
$\left(1,\pm p(\bmod 5)\right)$.

In case $p\equiv\pm 2(\bmod~5)$, this means
\[D=\{(1,\pm 2),\,(0,\pm 1)\}.\]
Clearly $D\cdot (0,\pm 2)\cap\Phi=\emptyset$
where $\Phi\subset G$ describes the
CM=type of the curve $\cC$.
As before, this implies that $\cC$
cannot be maximal in characteristic $p$.

So a necessarily condition for maximality
in characteristic $p$ is besides
$p\equiv 3(\bmod~4)$ that also $p\equiv \pm 1
(\bmod~5)$. From this, the result follows.
\end{proof}

\begin{remark}{\rm
In the proof above we only used the fact
that for a maximal curve, the slopes of
Frobenius are all positive. A stronger
condition is that in fact they need to be
equal to $\frac12$. Exploiting that, one
obtains similar results for other
values of $\ell$. For example, with
$\ell=17$ one can exclude characteristic
$p\equiv \pm 2(\bmod~17)$ in this way.
}\end{remark}

We finish this manuscript by briefly
mentioning some small cases of Conjecture~\ref{conj}.
\begin{itemize}
\item[$d=1$:] here statement~(i)
asserts the maximality of the elliptic curve
given by $y^2=x^3-4x$ over $\FF_{q^2}$.
This holds precisely when $q\equiv -1(\bmod~4)$.
Statement~(ii) asserts, besides this congruence
condition, also the maximality of the
hyperelliptic curve given by $v^2=u$. Since
this maximality holds over any $\FF_{q^2}$
(the curve has genus~$0$),
Conjecture~\ref{conj} holds for $d=1$.
%\item[$d=3$:] as already shown in
%Remark~\ref{Conjd=3}, also for
%$d=3$ Conjecture~\ref{conj} holds.
\item[$d \geq 5$:] we  verified using Magma for
all  prime powers $q<100$  and $d \in \{9,15,21\}$ Conjecture~\ref{conj} holds.
In fact, the experiment shows for these cases,
as we saw in Remark~\ref{kohelsmithresult} for the case $d$ is an odd prime,
that the curves $\cC\colon y^2=\varphi_d(x)$ and  $\cC_1\colon y^2=(x^2-4)\varphi_d(x)$
over $\FF_{q}$ are related by $\cJ(\cC_1)\sim \cJ(\cC)\times \cE$ with $\cE\colon y^2=x^3+x$.
\end{itemize}

 \section*{Acknowledgement}
{The first author was supported by
FAPESP/SP-Brazil grant 2017/19190-5.
The second author thanks Marco Streng and Nurdag\"{u}l
Anbar Meidl for helpful suggestions. We also thank the referee for
various comments which helped us improve the exposition.}

 \end{document}